\newtheorem{example}{Example}
\newtheorem{assumption}{Assumption}
\newtheorem{proposition}{Proposition}
\newtheorem{remark}{Remark}
\newtheorem{corollary}{Corollary}
\newtheorem{definition}{Definition}
\crefname{assumption}{Assumption}{Assumptions}
\crefname{example}{Example}{Example}
\crefname{proposition}{Proposition}{Propositions}
\crefname{definition}{Definition}{Definitions}
\newcommand*{\Break}{\textbf{break}}
\DeclareMathOperator{\Ker}{Ker}
\newcommand{\floor}[1]{\left\lfloor #1 \right\rfloor}
\newcommand{\lp}{\left(}
\newcommand{\rp}{\right)}
\DeclareMathOperator{\crit}{Crit}
\newcommand{\rd}{\mathbb{R}}
\newcommand{\relu}{\sigma_r}
\title{A singular Riemannian Geometry Approach to Deep Neural Networks III. Piecewise Differentiable Layers and Random Walks on $n$-dimensional Classes.}
\author[1]{Alessandro Benfenati}
\author[2]{Alessio Marta}
\affil[1]{\small{Environmental Science and Policy Department, Università di Milano},	{Via Celoria 2}, {Milano},
	{20133}, 
	{Italy}}
\affil[1]{{Gruppo Nazionale Calcolo Scientifico},
	{INDAM},{Italy}}   
\affil[2]{{Dipartimento di Scienze del Sistema Nervoso e del Comportamento, Universit{\`a} di Pavia},{Viale Golgi 19}, 
	{Pavia},
	{27100}, 
	{Italy}}
\date{}
\begin{document}
	
		\maketitle
	
	\begin{abstract}
		Neural networks are playing a crucial role in everyday life, with the most modern generative models able to achieve impressive results. Nonetheless, their functioning is still not very clear, and several strategies have been adopted to study how and why these model reach their outputs. A common approach is to consider the data in an Euclidean settings: recent years has witnessed instead a shift from this paradigm, moving thus to more general framework, namely Riemannian Geometry. Two recent works introduced a geometric framework to study neural networks making use of singular Riemannian metrics. In this paper we extend these results to convolutional, residual and recursive neural networks, studying also the case of non-differentiable activation functions, such as ReLU. We illustrate our findings with some numerical experiments on classification of images and thermodynamic problems.
	\end{abstract}

	\section{Introduction}\label{sec:intro}
	
	Deep Neural Networks play a crucial role in several tasks, such as computer vision for autonomous driving \cite{9440863,glei23}, image blind deconvolution \cite{9506502,benfenati2023neural}, image segmentation \cite{BenfenatiSSVM}, image restoration in medical frameworks \cite{9732379,sapienza2022deep,EVANGELISTA2023102156}, image classification \cite{a15100386}. They find applications also in processing of time series \cite{Tsa17} and natural language \cite{LAURIOLA2022443,Col08}, recommender systems \cite{Oord13} and brain-computer interfaces \cite{Avi20}. Moreover, the very recent period has witnessed an incredible impact of this architectures in everyday lives with new Large Language Models (LLMs) for text generation such as ChatGPT \cite{radford18} or BERT \cite{Tavan21}, and powerful networks are ow capable of creating realistic images \cite{geminiteam2023gemini} or videos \cite{liu2024sora}. 
	
	The literature presents several approach to study the behavior of such powerful architectures, by exploring the geometry of network's data, both in input and output and also between the hidden layers. The common approach is to consider that such data belong to an Euclidean space, with the natural notion of Euclidean distance \cite{kidger2020universal}. For example, \cite{NEURIPS2023_90043ebd} explores the space of images for studying the phenomenon of under-sensitivity of vision models, such as Convolutional Neural Networks (CNNs) and Transformers. The authors propose a Level Set Traversal algorithm, which construct the linear path between two different images that are classified with the same label by a vision model. This algorithm nonetheless assumes the Euclidean setting. The same geometry is used in \cite{salman2024intriguing}, where gradient-based procedures are employed for studying equivalence structures in the embedding space of Vision Transformers. 
	
	Assuming an Euclidean settings, one may lose some information: indeed, the data may live in a lower dimensional manifold, where the Euclidean distance may provide low information. A classical, intuitive, example is the one of an horseshoe in $\rd^3$: the two extrema of the horseshoe could be close, hence two points on different extrema could have a small Euclidean distance in the 3-dimensional space, but if such points must be connected via a path constrained to lie on the horseshoe manifold, the actual distance could be large. More practical examples encompass data lying on graphs, as 2D meshes in computer graphics \cite{boscaini2016learning} or weighted graphs, which occur in social network analysis \cite{CHUNAEV2020100286}. Recently, generative diffusion models experience this novel approach, \emph{i.e.}, employing the language of Riemannian manifolds \cite{de2022riemannian,fishman2023diffusion}.
	
	This approach goes under the term of Geometric Deep Learning \cite{9003285,7974879,henaff2015deep,8100059}: the idea to consider a network as a sequence of maps between manifold arose in \cite{hauser2017principles}. The work \cite{shen2018differential} inspired the first paper of this series \cite{BeMa21a} where via the pullback of the metric $g$ on the output manifold the authors were able to induce on all the manifolds $M_i, i=1,\dots,n$ of the network a structure of pseudometric space:
	\begin{equation*}
	\begin{tikzcd}
	M_{0} \arrow[r, "\Lambda_1"] & M_{1} \arrow[r, "\Lambda_2"] & M_{2} \arrow[r,"\Lambda_4"]  & \cdots \arrow[r,"\Lambda_{n-1}"] & M_{n-1} \arrow[r, "\Lambda_n"] & M_n
	\end{tikzcd}
	\end{equation*}
	$M_0$ takes the name of input manifold, while the last one, $M_n$, is the output manifold. The intermediate $M_2, M_3, \dots, M_n$ are representation manifolds. The maps $\Lambda_i$ are the functions mapping one manifold to another. This allows to obtain a full-fledged metric space, adopting a metric identification; beside the theoretical implications of this result (\emph{e.g.}, among others, the map $\Lambda_1\circ \dots \circ\Lambda_i$ preserve the length of the curves on the $i$-th manifold), practical results can be extracted from this framework. Indeed, building on the results of \cite{BeMa21a}, in \cite{BeMa21b} the authors developed 2 algorithms, namely SiMEC and SiMEXP: the former builds the classes of equivalence in the input manifold, that is the points that are mapped in the same output by the network, whilst the former allows to explore the input manifold by "jumping" from one class to another. Nonetheless, these works have some limits: the maps between manifolds are smooth, such as softmax, softplus, the models considered for the networks were only feedforward ones and the output of the networks were mono-dimensional.
	
	We extend hence here the results of \cite{BeMa21a}. In this work, we consider piecewise differential maps, such as Rectified Linear Unit (ReLU) and Leaky ReLU, which are commonly used as activation functions in Deep Learning techniques. Moreover, we generalize all the approaches to $n$-dimensional problems, such as image classification problems. The third novelty of this work consists in generalizing the previous theoretical results to convolutional operators and to complex network structures, such as residual and recurrent networks, used for \cite{Torr20} and \cite{Fan2015}.
	
	This work is organized as follows. \cref{sec:geometric_framework} firstly collects the basic notion of Riemannian geometry for making this paper self-contained (the interested reader may find more details in \cite{BeMa21a}). \cref{sec:random_walks_on_equivalence_classes} develop the strategy for the exploration of the equivalence classes via random walks, with some insights and comments about the probability to visit more than once the same inputs and the expected time for reaching a new class component. 
	\cref{sec:extension_c1_layers} generalizes the results of the previous paper to more complex architectures, such as convolutional layers, residual blocks and recurrent networks. \cref{sec:extension_non_differentiable} extends the framework to non differentiable functions. \cref{sec:numerical_experiments} presents numerical tests for the validation of the developed theory, and finally \cref{sec:concl} draws the final conclusions.

	\paragraph{Notation} $\mathbb{R}^n$ is real vector space, whose elements have $n$ elements. $\Ker(f)$ denotes the kernel of the linear application $f$. A map $f$ between two sets $A$ and $B$ is a function from $A$ to $B$. If $f: \mathbb{R}^m \rightarrow \mathbb{R}^m$ is a vector--valued function, we denote the $k$--th component of $f$ with $f_k$. Given a matrix $A$, we denote the element in row $i$ and column $j$ with $A_{ij}$; If $v$ is a vector in $\mathbb{R}^n$, we denote the $i$--th component with $v_i$. Given a metric $g$, the notation $g_{hk}$ denotes the element of the associated matrix at row $h$ and column $k$. The set $
	\mbox{ae--}\mathcal{C}^1(\Omega)$ is the set of almost-everywhere $\mathcal{C}^1$ functions:  $\mbox{ae--}\mathcal{C}^1(\Omega)= \{f \in \mathcal{C}^1(\Omega/Z), \,f\in\mathcal{C}^0(Z),\,  f\notin \mathcal{C}^1(Z), \,\mu(Z)=0\}
	$, with $\mu$ a suitable measure on $\mathbb{R}^n$.
	The Rectified Linear Unit function (ReLU) will be denoted via $\relu(x)$:
	$$
	\relu(x) = \begin{cases}
	x & x>0\\
	0 & \rm{otherwise}
	\end{cases}
	$$
	
	\section{A singular Riemannian geometry approach to neural networks}\label{sec:geometric_framework}
	
	In this section we start by a short resume on Riemannian geometry, and then we adapt this framework to neural network, recalling and generalizing some notions from \cite{BeMa21a,BeMa21b}. As stated in \Cref{sec:c_k} the results of \cite{BeMa21a} keep to hold true considering functions which are just differentiable. Furthermore, in \Cref{sec:extension_non_differentiable,sec:extension_c1_layers} we shall extend this framework to convolutional layers, residual blocks, recurrent network and non-differentiable layers. 
	
	\subsection{Singular Riemannian metrics}

	we point out that the manifolds we are going to consider will be either $\mathbb{R}^n$ or some open subsets of $\mathbb{R}^n$, $n \in \mathbb{N}$. For the case of a generic $n$-dimensional smooth manifold, the interested reader can find more details in \cite{BeMa21a} where we treat singular Riemannian metrics in full generality. Intuitively, a singular metric $g$ can be seen as a degenerate scalar product changing from point to point.
	\begin{definition}[Singular Riemannian metric]\label{def:simplified_singular_metric}
		Let $M=\mathbb{R}^n$ or an open subset of $\mathbb{R}^n$.
		A singular Riemannian metric $g$ over $M$ is a map $g:M \rightarrow Bil(\mathbb{R}^n \times \mathbb{R}^n)$ that associates to each point $p$ a positive semidefinite symmetric bilinear form $g_p:\mathbb{R}^n \times \mathbb{R}^n \rightarrow \mathbb{R}$ in a smooth way.
	\end{definition}
	We shall use pseudometric or singular metric for referring to the same mathematical concept of \cref{def:simplified_singular_metric}.
	\begin{remark}
		In \cref{def:simplified_singular_metric}, we identified $\mathbb{R}^n$ as an affine space with its space of displacement vectors -- or, in more general terms, the smooth manifold $\mathbb{R}^n$ and its tangent space at each point \cite{Tu11,DoCarmo16}. However, one should keep in mind that a metric $g$ associates to every point $p$ in the affine space a bilinear form over the space of displacement vectors.
	\end{remark}
	Note that the singular metric $g_p(x,y)$ at a point $p \in \mathbb{R}^n$ may be null even if both $x\neq 0$ and $y\neq0$. Given a vector $v \in \mathbb{R}^n$, we define the semi--norm of a $\| v \|_p = \sqrt{g_p(v,v)}$. Given a curve $\gamma:[a,b]\rightarrow \mathbb{R}^n$, we can define its pseudolenght.
	\begin{definition}[Pseudolenght of a curve]
		\label{def:pseudolength}
		Let $\gamma:[a,b]\rightarrow \mathbb{R}^n$ a curve defined on the interval $[a,b]\subset\mathbb{R}$ and $\|v\|_p=\sqrt{g_p(v,v)}$ the pseudo--norm induced by the pseudo--metric $g_p$ at point $p$. Then the pseudolength of $\gamma$ is defined as
		\begin{equation}
		Pl(\gamma) = \int_a^b \| \dot{\gamma}(s) \|_{\gamma(s)} ds = \int_a^b \sqrt{g_{\gamma(s)}(\dot{\gamma}(s),\dot{\gamma}(s))} ds
		\end{equation}
	\end{definition}
	Another useful notion, closely related to the pseudolenght of a curve is that of energy of a curve.
	\begin{definition}[Energy of a curve]
		\label{def:energy}
		Let $\gamma:[a,b]\rightarrow \mathbb{R}^n$ a curve defined on the interval $[a,b]\subset\mathbb{R}$ and $\|v\|_p=\sqrt{g_p(v,v)}$ the pseudo--norm induced by the pseudo--metric $g_p$ at point $p$. Then the energy of $\gamma$ is defined as
		\begin{equation}
		E(\gamma) = \int_a^b g_{\gamma(s)}(\dot{\gamma}(s),\dot{\gamma}(s)) ds
		\end{equation}
	\end{definition}
	A notable consequence of the degeneracy of the metric, is that there may exist smooth non-constant curves which have null length, called \emph{null curves}. This happens when $\dot{\gamma}(s) \in \Ker(g_{\gamma(s)})$ for every $s \in [a,b]$. The notion of pseudolenght allows us to equip $\mathbb{R}^n$ with the structure of pseudometric space with the pseudodistance defined in \cref{def:pseudodistance}.
	\begin{definition}[Pseudodistance]
		\label{def:pseudodistance}
		Let $x,y \in\mathbb{R}^n$. The pseudodistance between $x$ and $y$ is then 
		\begin{equation}\label{eq:pseudodistance}
		Pd(x,y) = \inf \{ Pl(\gamma) \ | \ \gamma:[0,1]\rightarrow M \mbox{ is a piecewise } \mathcal{C}^1 \mbox{ curve with } \gamma(0)=x \mbox{ and } \gamma(1)=y \}  
		\end{equation}
		where $Pl(\gamma)$ denotes the pseudolength of the curve $\gamma$ as in \Cref{def:pseudolength}.
	\end{definition}
	As we observed before, in $\mathbb{R}^n$ endowed with a singular Riemannian metric, there exist non-trivial curves of length zero. A notable consequence is that there are points whose distance is null, which are metrically indistinguishable. Identifying these points making use of the equivalence relation $x \sim y \Leftrightarrow Pd(x,y)=0$ for $x,y \in \mathbb{R}^n$, we obtain a metric space $(\mathbb{R}^n / \sim,Pd)$. Given a point $x \in \mathbb{R}^n$ its class of equivalence is of the form $[x]=\{y \in \mathbb{R}^n \ | \ Pd(x,y) = 0\}$, see \cite{BeMa21a} for further details). The last key ingredient of our work is the notion of pullback of a metric through a $\mathcal{C}^k$ map. Let $F:\mathbb{R}^n \rightarrow \mathbb{R}^m$ a smooth function. Suppose to endow $\mathbb{R}^m$ with the canonical Euclidean metric $g $ whose associated matrix is $\bm{I}_m$, the identity matrix of dimension $m$. Then we can 	equip $\mathbb{R}^n$ with the pullback metric $F^*g$ as follows. Chosen two coordinate systems $(x_1,\cdots,x_m)$ and $(y_1,\cdots,y_n)$ of $\mathbb{R}^m$ and $\mathbb{R}^n$ respectively, the matrix associated to the pullback of $g$ through $F$ reads:
	\begin{equation}\label{eq:pullback}
	(F^*g)_{ij} = \sum_{h,k = 1}^{m} \left( \frac{\partial F^h}{\partial x^i} \right) g_{hk} \left( \frac{\partial F^k}{\partial x^j} \right)
	\end{equation}
	We use the pullback to transport the known metric information of the codomain of a function $F$ to the domain. In particular, in many cases, the pullback metric allows to identify all the points of the domain which are mapped to the same element of the codomain.
	At last we recall the notion of submersion.
	\begin{definition}[Submersion]\label{def:submersion}
		Let $f:M \rightarrow N$ be a $\mathcal{C}^1$ map between manifolds. Then $f$ is a submersion if, in any chart, the Jacobian $J_F$ has rank $\dim(N)$.
	\end{definition}
	
	\subsection{Quotients induced by the degenerate metrics}\label{sec:c_k}
	Let $f:D \subseteq \mathbb{R}^n\rightarrow \mathbb{R}$ be a smooth function. Given $x,y \in D$, the notion of connected level curve introduce the equivalence relation $x \sim y$ if and only if there is a piecewise $\mathcal{C}^1$ curve $\gamma:[0,1]\rightarrow \mathbb{R}^n$ with $\gamma(0)=x$ and $\gamma(1)=y$ such that $f(\gamma(s))=f(x)$ for every $s \in [0,1]$. The metric $g=(1)\in\rd^{1 \times 1}$, \emph{i.e.}, the identity, is the trivial one over $\mathbb{R}$, the pullback $f^*g$ is a singular Riemannian metric over $D$. The presence of a singular metric canonically introduce another equivalence relation, $\sim_g$, defined as follows: $x \sim_g y$ if $x$ and $y$ are connected by a null curve. By \cite[Proposition 4]{BeMa21a} the two spaces $D/\sim$ and $D/\sim_g$ coincide, therefore we can characterize the connected components of the level sets as the sets of all the connected points in $D$ whose pseudodistance is null. However, we still do not know if this space is still a smooth manifold. Assuming the additional hypothesis that $f$ is also a submersion, namely $\nabla f(x) \neq 0 \ \forall x \in D$, then \cite[Proposition 6 and Proposition 7]{BeMa21a} hold true and we can conclude that:
	\begin{proposition}\label{prop:smooth}
		Let $f:D \subseteq \mathbb{R}^n\rightarrow \mathbb{R}$ be a smooth submersion. The connected components of the level sets of $f$ are path connected submanifolds of $D$ of dimension $n-1$, whose tangent vectors are in $Ker(f^*g)$.
	\end{proposition}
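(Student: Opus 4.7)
The plan is to break the statement into three claims and handle each with standard differential-geometric tools, relying on the fact that $f$ is a submersion.

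First I would invoke the regular value theorem. Since $f\colon D\subseteq\mathbb{R}^n\to\mathbb{R}$ is smooth and $\nabla f(x)\neq 0$ for every $x\in D$, every value $c\in f(D)$ is a regular value, so the level set $f^{-1}(c)$ is a smooth embedded submanifold of $D$ of codimension $1$, hence of dimension $n-1$. Taking connected components preserves the submanifold structure and the dimension, so each component $N\subseteq f^{-1}(c)$ is itself an $(n-1)$-dimensional submanifold of $D$. Because $N$ is locally diffeomorphic to an open subset of $\mathbb{R}^{n-1}$, it is locally path connected, and a connected, locally path-connected space is path connected; this yields the path-connectedness claim. Alternatively one can invoke the earlier identification $D/\mathord{\sim}=D/\mathord{\sim_g}$ from \cite{BeMa21a}: each class is the set of points joined by a piecewise $\mathcal{C}^1$ null curve, so path-connectedness is built into the construction.

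Next I would identify the tangent spaces. For a submersion, $T_pN=\Ker(df_p)$, the $(n-1)$-dimensional hyperplane orthogonal to $\nabla f(p)$. The job is then to check that this hyperplane coincides with $\Ker((f^*g)_p)$. Specialising the pullback formula \eqref{eq:pullback} to the scalar case $m=1$ with $g=(1)$, the matrix of $f^*g$ at $p$ reads
\begin{equation*}
(f^*g)_{ij}(p) \;=\; \frac{\partial f}{\partial x^i}(p)\,\frac{\partial f}{\partial x^j}(p),
\end{equation*}
that is, the rank-one outer product $\nabla f(p)\,\nabla f(p)^{\top}$. A vector $v\in\mathbb{R}^n$ lies in its kernel iff $\nabla f(p)\bigl(\nabla f(p)^{\top}v\bigr)=0$, and since $\nabla f(p)\neq 0$ (this is exactly where the submersion hypothesis enters), this is equivalent to $\nabla f(p)^{\top}v=0$, i.e.\ $v\in\Ker(df_p)$. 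Thus $T_pN=\Ker((f^*g)_p)$ at every $p\in N$, proving the tangent-vector statement.

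The argument is essentially a bookkeeping exercise combining the regular value theorem with a one-line linear algebra computation; the only step that requires care is making sure the non-vanishing of $\nabla f$ is used in both places, to guarantee that $f^{-1}(c)$ is a bona fide hypersurface and simultaneously that the rank-one pullback matrix has the expected $(n-1)$-dimensional kernel. The main conceptual obstacle, if any, is simply to reconcile the intrinsic definition of the classes through null curves (as in $\sim_g$) with the extrinsic description as level sets of $f$, but this has already been settled by Proposition 4 of \cite{BeMa21a}, so here it can be cited rather than re-proved.
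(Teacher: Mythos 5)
Your argument is correct, and it takes a genuinely different route from the paper's. The paper does not prove the proposition from first principles: it observes that under the submersion hypothesis Propositions 6 and 7 of \cite{BeMa21a} apply, and those rest on Godement's criterion applied to the quotient $D/\!\sim_g$ by the null-curve equivalence relation, together with the identification $D/\!\sim\,=D/\!\sim_g$ from Proposition 4 of \cite{BeMa21a}. The paper's logic thus runs from the singular metric and its null curves to the manifold structure of the classes, which is the natural direction for this series since the degenerate metric is the primitive object, and it is the form of the argument that the authors later relax to the $\mathcal{C}^1$ case. You instead argue extrinsically: the regular value theorem exhibits each level set as an embedded $(n-1)$-submanifold, local path-connectedness upgrades connected components to path components, and the rank-one computation $(f^*g)_p=\nabla f(p)\,\nabla f(p)^{\top}$ identifies $\Ker\bigl((f^*g)_p\bigr)$ with $\Ker(df_p)=T_pN$. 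This is more elementary and self-contained, and it isolates exactly where the non-vanishing of $\nabla f$ enters (once for the regular value theorem, once for the kernel being only $(n-1)$-dimensional). What it does not give by itself is the equivalence between the level-set picture and the null-curve picture, i.e.\ that the classes of $\sim_g$ are precisely these connected components; that is the content of the cited Proposition 4, and you correctly flag it as something to be cited rather than re-proved. Both proofs are sound; yours is the standard textbook derivation, the paper's is the quotient-theoretic one it needs for the rest of the framework.
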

	As a matter of fact both \cite[Proposition 6]{BeMa21a} and \cite[Proposition 7]{BeMa21a} keep to hold true even if we assume that the function $f$ is only $\mathcal{C}^1$. Indeed, in the proof of Godement's criterion, which is employed to prove the two aforementioned propositions, assuming to work with a $\mathcal{C}^1$ function allows to prove that there is a differentiable structure of class $\mathcal{C}^1$ instead of one of class $\mathcal{C}^\infty$. Since for our purposes we only need to know the first derivatives, the existence of a $\mathcal{C}^1$ differentiable structure over a topological manifold is all we need. Note that this observation allows to relax the smoothness hypothesis also in \cite[Proposition 4]{BeMa21a}. We also note that in the case in which $f$ is not a submersion, \Cref{prop:smooth} holds true for $f$ restricted to $D \setminus \crit(f)$, with $\crit(f) := \{x \in D \ | \ \triangledown f = 0 \}$. Therefore we can generalize \Cref{prop:smooth} as follows.
	\begin{proposition}\label{prop:c_1}
		Let $f:D \subseteq \mathbb{R}^n\rightarrow \mathbb{R}$ be a $\mathcal{C}^1$ function. Let $h := f|_{E}, \, E = D \setminus \crit(f)$. Then the connected components of the level sets of $h$ are path connected submanifolds of $E$ of dimension $n-1$, whose tangent vectors are in $Ker(h^*g)$.
	\end{proposition}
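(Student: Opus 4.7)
The plan is to reduce the statement to the $\mathcal{C}^1$-version of \Cref{prop:smooth} by restricting the ambient domain to the locus where $f$ is a genuine submersion. First I would check that $E = D \setminus \crit(f)$ is open in $D$: since $f$ is $\mathcal{C}^1$, the gradient $\nabla f$ is continuous, so $\crit(f) = (\nabla f)^{-1}(\{0\})$ is closed in $D$, and $E$ inherits a $\mathcal{C}^1$-manifold structure from $\mathbb{R}^n$. On $E$ the restriction $h = f|_E$ is by construction a $\mathcal{C}^1$ submersion, because $\nabla h(x) = \nabla f(x) \neq 0$ for every $x \in E$ forces the $1 \times n$ Jacobian of $h$ to have rank $1 = \dim(\mathbb{R})$, matching \Cref{def:submersion}.

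With $h$ a $\mathcal{C}^1$ submersion on the open set $E$, I would then directly appeal to the $\mathcal{C}^1$-extension of \Cref{prop:smooth} discussed in the paragraph preceding the statement. That extension follows from [BeMa21a, Propositions 4, 6, 7], where the only adaptation is that Godement's criterion is applied in the $\mathcal{C}^1$ setting and therefore produces a $\mathcal{C}^1$ -- rather than $\mathcal{C}^\infty$ -- differentiable structure on each connected component of a level set. Since the statement only mentions the dimension, the path-connectedness, and the characterization of tangent vectors via first derivatives, this loss in regularity is immaterial. The conclusion on tangent vectors follows from the pullback formula \eqref{eq:pullback}: any vector $v$ tangent to a level set satisfies $dh(v) = 0$, and hence $(h^*g)_{ij} v^i v^j$ vanishes, so $v \in \Ker(h^*g)$.

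The main obstacle lies precisely in that reduction step, i.e.\ in verifying that [BeMa21a, Propositions 4, 6, 7] truly carry over verbatim once smoothness is weakened to $\mathcal{C}^1$. The delicate checkpoint is the implicit function theorem used inside Godement's criterion: it is available for $\mathcal{C}^1$ functions and yields $\mathcal{C}^1$ local parametrizations of each level set, so that the transition maps between charts built from two such implicit representations are automatically $\mathcal{C}^1$. A secondary concern is ensuring that excluding $\crit(f)$ is really necessary and sufficient -- necessary because level sets through critical points may fail to be manifolds (already visible for $f(x,y) = x^2 + y^2$ at the origin), and sufficient because once critical points are removed, $h$ satisfies the submersion hypothesis of the already-proved proposition everywhere on $E$.
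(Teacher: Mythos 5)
Your proposal matches the paper's own argument: the paper likewise obtains \Cref{prop:c_1} by removing $\crit(f)$ so that the restriction becomes a $\mathcal{C}^1$ submersion, and then invoking the $\mathcal{C}^1$ version of \Cref{prop:smooth}, justified by noting that Godement's criterion (via \cite[Propositions 4, 6, 7]{BeMa21a}) only requires a $\mathcal{C}^1$ differentiable structure. Your added checks (openness of $E$, the implicit function theorem in the $\mathcal{C}^1$ setting, and the tangent-vector computation) are correct elaborations of steps the paper leaves implicit.
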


	\subsection{The Geometric Framework for Neural Networks}

	\begin{definition}[Neural Network]\label{def:neural_netowork}
		A neural network is a sequence of $\mathcal{C}^1$ maps $\Lambda_i$ between manifolds of the form:
		\begin{equation}\label{def:sequence_of_maps}
		\begin{tikzcd}
		M_{0} \arrow[r, "\Lambda_1"] & M_{1} \arrow[r, "\Lambda_2"] & M_{2} \arrow[r,"\Lambda_4"]  & \cdots \arrow[r,"\Lambda_{n-1}"] & M_{n-1} \arrow[r, "\Lambda_n"] & M_n
		\end{tikzcd}
		\end{equation}
		We call $M_0$ the \textit{input manifold} and $M_n$ the \textit{output manifold}. All the other manifolds of the sequence are called \textit{representation manifolds}. The maps $\Lambda_i$ are the layers of the neural network.
	\end{definition}
	The assumptions on this sequence of maps are the following.
	\begin{assumption}\label{Assumption_1}
		The manifolds $M_i$ are open and path-connected sets of dimension $\dim M_i=d_i$.
	\end{assumption}
	\begin{assumption}\label{Assumption_2}
		The sequence of maps \eqref{def:sequence_of_maps} satisfies the following properties:
		\begin{itemize}
			\item[1)] The maps $\Lambda_i$ are $\mathcal{C}^1$ submersions.
			\item[2)] $\Lambda_i(M_{i-1})=M_i$ for every $i = 1, \cdots , n$.
		\end{itemize}
	\end{assumption}
	\begin{remark}\label{rem:dimensions}
		In our framework the dimension of the manifold $M_i$ does not correspond to the number of nodes of a layer. See \cite{BeMa21a} for a thorough discussion. Note that this assumption entails that $\dim(M_i) \geq \dim(M_{i+1})$.
	\end{remark}
	\begin{assumption}\label{Assumption_3}
		The manifold $M_n$ is equipped with the structure of Riemannian manifold, with metric $g^{(n)}$.
	\end{assumption}
	The pullbacks of $g_n$ trough the maps $\Lambda_n$, $\Lambda_n \circ \Lambda_{n-1}$, ..., $\Lambda_n \circ \Lambda_{n-1} \circ \cdots \circ \Lambda_{1}$ yield a sequence of (in general degenerate) Riemannian metrics $g^{(n-1)}, g^{(n-2)},\cdots, g^{(0)}$ on $M_{n-1},M_{n-2},\cdots,M_0$.
	\begin{remark}
		In the rest of the paper, we shall denote with $\mathcal{N}_i$ the map $\Lambda_n \circ \Lambda_{n-1} \circ \cdots \circ \Lambda_{i}$. In the case $i=1$, we shall simply write $\mathcal{N}$ instead of $\mathcal{N}_1$, since we are considering the whole neural network.
	\end{remark}
	By \Cref{prop:c_1} an immediate consequence of our definitions is that two points in an equivalence class of any manifold $M_i$ induced by the pullback metric $g^{(i)}$ are mapped by the subsequent layers, namely by $\Lambda_i \circ \cdots, \Lambda_n$ to the same output.
	In \cite{BeMa21a} we considered smooth feedforward layers, that we modeled as a particular kind of maps between manifolds defined as follows.
	\begin{definition}[Smooth layer]\label{def_layer}
		Let $M_{i-1}$ and $M_i$ be two smooth manifolds satisfying the assumptions above. A map $\Lambda_i : M_{i-1} \rightarrow M_i$ is called a smooth layer if it is the restriction to $M_{i-1}$ of a function $\overline{\Lambda}^{(i)}(x)  : \mathbb{R}^{d_{i-1}} \rightarrow \mathbb{R}^{d_i}$ of the form
		\begin{equation}
		\overline{\Lambda}^{(i)}_\alpha (x)= F^{(i)}_\alpha\lp\sum_\beta A_{\alpha\beta}^{(i)} x_\beta+b^{(i)}_\alpha\rp 
		\end{equation}
		for $i=1,\cdots,n$, $x \in \mathbb{R}^{d_{i}}$, $b^{(i)} \in \mathbb{R}^{d_i}$ and $A^{(i)} \in \mathbb{R}^{d_{i} \times d_{i-1}}$, with $F^{(i)} : \mathbb{R}^{d_i} \rightarrow \mathbb{R}^{d_i} $ a diffeomorphism.
	\end{definition}
	For these kind of layers we also assumed the full rank hypothesis (see \cite[Remark 8]{BeMa21a} for a throughout discussion about this hypothesis).

	In \cref{sec:extension_c1_layers} we will consider other feedforward structures like convolutional layers and residual blocks. Our framework can also be adapted to recurrent neural networks: In the case a neural network can retain memory of its previous states we can unfold it in time, obtaining a sequence of states of the neural network -- along with the input data from the previous states. This topic is addressed in \cref{ssec:recurrent_layers_and_networks}. In \cref{sec:extension_non_differentiable} we will relax the differentiability assumption on the layer to treat activation functions like ReLU.
.

	\section{Random walks on (and between) n-dimensional equivalence classes}\label{sec:random_walks_on_equivalence_classes}
	
	In \cite{BeMa21b} we discussed how to build the one-dimensional equivalence classes of some kind of neural networks whose output manifold is of dimension one, presenting in particular the SiMEC algorithm. This section is devoted to generalize these results to the case of n-dimensional equivalence classes. In the case of $1D$ equivalence classes, the space of the the null vectors at a point is a line; If instead we deal with equivalence classes of generic dimension $n$, the null vectors at each point are generating a vector space of dimension $n$. In both cases starting from the point $p$ and moving along null vectors \footnote{With moving along a null vector we mean moving along the integral curve whose starting point is $p$ and whose initial velocity is the chosen null vector at $p$.} we remain on the same equivalence class (see \cref{fig:ComparisonEqClasses}).
	\begin{figure}[h!]
		\begin{center}
			\begin{subfigure}[t]{0.45\textwidth}
				\begin{center}
					\includegraphics[width=.95\linewidth]{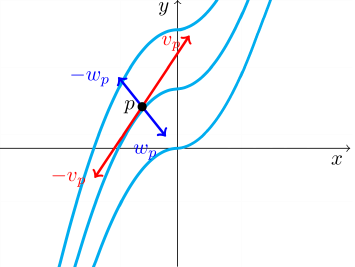}
					\caption{$2D$ manifold with its $1D$ equivalence classes.}
					\label{fig:1d_equivcl}
				\end{center}
			\end{subfigure}\hfill\begin{subfigure}[t]{0.45\textwidth}
				\begin{center}
					\includegraphics[width=.95\linewidth]{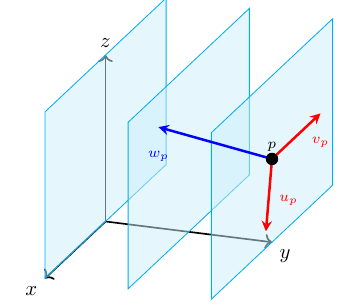}
					\caption{$3D$ manifold with its $2D$ equivalence classes.}
					\label{fig:2d_equivcl}
				\end{center}
			\end{subfigure}
		\end{center}
		\caption{Equivalence classes and null vectors of a manifold $M$. Given a point $p$, by proceeding in the direction of the null vectors $\pm v_p$ we stay on the class of equivalence $[p]$. By proceeding in the direction of a non-null vector $\pm w_p$, we arrive to another class of equivalence. Left panel: A $2D$ manifold is foliated by one dimensional equivalence classes (the cubic curves in cyan). At the point $p$ there is a line of null vectors (in red), proceeding along which we stay on the same class of equivalence. Moving in the direction of non-null vectors (in blue) we change class of equivalence. The tangent plane at a point of a $2D$ manifold is a vector space of dimension $2$, therefore since the space of the null vectors is one dimensional, also the space of the non-null vectors is one dimensional. Right panel: A $3D$ manifold is foliated by $2D$ equivalence classes (the cyan planes). This time the space of the null vectors is a two-dimensional space -- moving along linear combinations of $u_p$ and $v_p$ we remain on the same equivalence class of $p$, the right cyan plane. Moving along the direction of $w_p$, which is non-null, we change equivalence class. The space of the non null vectors is one dimensional. In both cases the manifold $M$ is foliated by its classes of equivalence.}
		\label{fig:ComparisonEqClasses}
	\end{figure}
	Theoretically, we could explore a $nD$ equivalence class generating a mesh of point following at each point $n$ linearly independent null vectors generating the $n$-dimensional vector space of null-vectors: see \cref{fig:meshExpl} for a visual inspection. Starting from a point $p$, in this way we find $2^n$ points on the same equivalence class, for each of them we find other $2^n$ points following their null vectors and so on. After $k$ iterations we generated $2^{n\,k}$ points on the same equivalence class. It is clear that if the dimension $n$ is not low enough, let us say $n=2,3$, and if the the number of iterations is not small computational issues about running out of memory will surely arise. Take for example the MNIST handwritten digit database \cite{MNIST} with $28 \times 28$ monochromatic images ($n=28 \cdot 28 = 784$). Consider the usual classification task with $10$ different classes, one per digit. The dimension of the equivalence classes is then $784-10=774$. After $10$ iterations of the procedure we just described we find $2^{7740}$ points, namely about $10^{2329}$ points - more than the number of atoms in the entire known universe! A convenient approach to overcome this hurdle, another example of the curse of dimensionality, is to generate just one point per iteration randomly choosing a null vector each time and therefore performing a random walk on the equivalence class. See \Cref{fig:randomexp} for such strategy.
	\begin{figure}[htbp]
		\begin{center}
			\begin{subfigure}[t]{0.45\textwidth}
				\begin{center}
					\includegraphics[width=.95\linewidth]{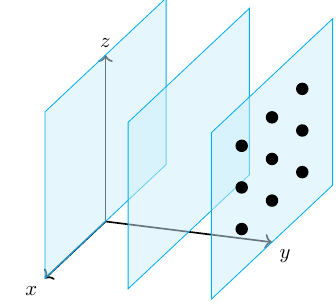}
					\caption{Exploration via a mesh grid.}
					\label{fig:meshExpl}
				\end{center}
			\end{subfigure}\hfill\begin{subfigure}[t]{0.45\textwidth}
				\begin{center}
					\includegraphics[width=.95\linewidth]{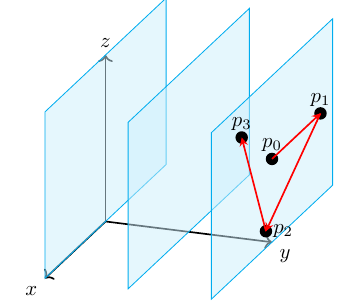}
					\caption{Random walk exploration.}
					\label{fig:randomexp}
				\end{center}
			\end{subfigure}
		\end{center}
		\caption{A comparison between the two approaches. Left panel: In the first case we build a $n$-dimensional grid of points approximating the equivalence class with a $n-$dimensional mesh. In a single iteration from each point we found at the previous step we build other $n$ new points -- the number of points grows exponentially with the number of iterations and the dimension of the space. Right panel: In the second approach, we generate points on a path described by randomly chosen null vectors -- the number of points to generate is independent on the dimension of the equivalence classes and increases linearly with the number of iterations.}
		\label{fig:GridVsRandomWalk}
	\end{figure}
	Employing a random walk-based exploration strategy raises some points. For example, this exploration could step on the same element more than once: but, depending on the dimension of the class, the probability of this event is small. For example, in the MNIST case, the probability to visit the same image is around 0.06\%, obtained via the theoretical estimation for multidimensional random walk in \cite{Montroll56}, which employs modified Bessel functions. Moreover, the time $\tau$ needed for visiting new sites by passing on already visited one is less than one, meaning that actually each random step visits a new sites (see \cite[table 1]{Regnier22} for the theoretical details).
	
	Given the data above, the steps of the SiMEC-nD algorithm to build an approximation of $[p] \subset M_0$ are depicted in \Cref{al:SIMECnD}.
	
	\begin{algorithm}[H]
		\begin{algorithmic}
			\Require {Choose $p_0 \in M_0$, $\delta>0$, maximum number of iterations $K$.}
			\Ensure  {A sequence of points $\{p_s\}_{s=1,\dots,K}$ approximatively in $[p_0]$; The energy $E$ of the approximating polygonal}
			\State{Initialise the energy: $E\gets 0$}
			\For {$k=1,\dots,K-1$}
			\State {Compute $g_{\mathcal{N}(p_k)}^n$}
			\State {Compute the pullback metric $g^0_{p_k}$ trough \Cref{eq:pullback}}
			\State {Diagonalize $g^0_{p_k}$ and find the eigenvectors}
			\State {$v_k \gets$ Choose a random linear combinations of the null eigenvectors}  
			\State {Compute the new point $p_{k+1} \leftarrow p_{k}+ \delta v_{k}$}
			\State {Add the contribute of the new segment to the energy $E$ of the polygonal}
			\EndFor
		\end{algorithmic}
		\caption{The SiMEC nD random walk algorithm }\label{al:SIMECnD}
	\end{algorithm}
	\begin{remark}
		The same algorithm can be applied to build the equivalence classes of a representation manifold $M_i$. The only difference is that we compute the pullback $g^i_{p_k} = \left( \Lambda_n \circ \Lambda_{n-1} \circ \cdot \circ \Lambda_{i+1} \right)^* g_n$ instead of the pullback through the entire network.
	\end{remark}
	\cref{al:SIMECnD} is not free from numerical and approximation errors: in order to reduce the impact of such errors several strategies have been employed, see \cite[Section 4.1]{BeMa21b}.
	\begin{remark}
		The energy of a curve introduced in \cref{def:energy} might be employed in \cref{al:SIMECnD} and the subsequent ones as a stopping criterion: once the energy is below a preset threshold, the algorithm stops.
	\end{remark}
	
	In \cite{BeMa21b} we introduced also the the SiMExp algorithm allowing one to pass from a given equivalence class to a near one at each step. \cref{al:SIMEXPnD} presents the generalization of the SiMExp algorithm, following the same approach, \emph{i.e.}, a random walk strategy, adopted for SiMEC. A visual inspection is given in \cref{fig:SiMExp}.

	\begin{algorithm}
		\begin{algorithmic}
			\Require {Choose $p_0 \in M_0$, $\delta>0$, maximum number of iterations $K$.}
			\Ensure  {A sequence of points $\{p_s\}_{s=1,\dots,K}$ such that two consecutive points are not in the same equivalence classes; The length $\ell$ of the approximating polygonal}
			\State{Initialise the length: $\ell\gets 0$}
			\For {$k=1,\dots,K-1$}
			\State {Compute $g_{\mathcal{N}(p_k)}^n$}
			\State {Compute the pullback metric $g^0_{p_k}$ trough \Cref{eq:pullback}}
			\State {Diagonalize $g^0_{p_k}$ and find the eigenvectors}
			\State {$v_k \gets$ Choose a random linear combinations of the non-null eigenvectors}
			\State {Compute the new point $p_{k+1} \leftarrow p_{k}+ \delta v_{k}$}
			\State {Add the contribute of the new segment to the length $\ell$ of the polygonal}
			\EndFor
		\end{algorithmic}
		\caption{SiMExp-nD random walk algorithm}\label{al:SIMEXPnD}
	\end{algorithm}
	
	\begin{figure}[h!]
		\begin{center}
			\begin{subfigure}[t]{0.45\textwidth}
				\begin{center}
					\includegraphics[width=.95\linewidth]{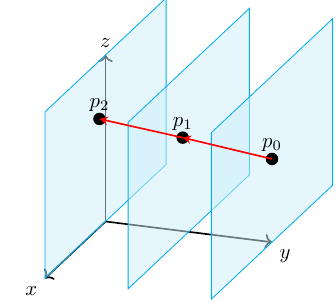}
					\caption{Moving between $2D$ classes of a $3D$ manifold.}
					\label{fig:smiexp1}
				\end{center}
			\end{subfigure}\hfill\begin{subfigure}[t]{0.45\textwidth}
				\begin{center}
					\includegraphics[width=.95\linewidth]{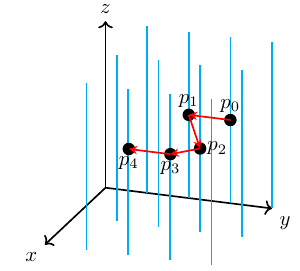}
					\caption{Moving between $1D$ classes of a $3D$ manifold.}
					\label{fig:smiexp2}
				\end{center}
			\end{subfigure}
		\end{center}
		
		\caption{The random walks generated by the SiMExp algorithm in a $3D$ manifold with equivalence classes of different dimensions. Left panel: The equivalence classes are planes and the algorithm yields a sequence of points lying on different planes. Right panel: The equivalence classes are lines and the algorithm yields a sequence of points lying on different lines. In both cases there is also the possibility during the random walk to visit again an equivalence class which has been already visited before.} 
		\label{fig:SiMExp}
	\end{figure}
	
	\begin{remark}
		As we will see in \Cref{sec:extension_non_differentiable}, with non differentiable activation functions, such as ReLU, the dimension of the equivalence classes is not the same, therefore the number of linearly independent null vectors can change from an equivalence class to another one.
	\end{remark}
	As already shown in \cite{BeMa21b}, one can combine \cref{al:SIMECnD} and \cref{al:SIMEXPnD} for the exploration of a manifold in its wholeness, moving among different equivalence classes and inside the same equivalence class.
	
	\section{$\mathcal{C}^1$ layers and networks of common use}\label{sec:extension_c1_layers}
	This section is devoted to generalize the results of \cite{BeMa21a} to several kind of differentiable layers and blocks of common use. Here we consider differentiable activation function, and we are going to discuss on layers coupled with non differentiable activation functions, such as ReLU and Leaky ReLU, in \Cref{sec:extension_non_differentiable}.
	
	\subsection{Convolutional layers}
	The first kind of layer we consider to extend our framework is the convolutional layer, mainly employed in neural networks processing visual data \cite{LeCun98}, ranging from images classification \cite{LeCun1995} to obstacle detection \cite{glei23} but that find applications also in processing of time series \cite{Tsa17} and natural language \cite{Col08}, recommender systems \cite{Oord13} and brain-computer interfaces \cite{Avi20}. For simplicity we treat in full detail the case of monochromatic images. The case of RGB images will be treated in essentially the same way. A similar approach has been used in \cite{peleska2023}, but the results of this work hold in more general settings and the application in \Cref{subsec:exp_mnist} is new. Consider a monochromatic image of dimension $n \times n$. This is represented by a matrix $P$ of pixels  which can assume values in a certain range, let us say from $0$ to $255$. Then we can see the image as a subspace of the set of the $n \times n$ matrices, which is naturally isomorphic to $\mathbb{R}^{n^2}$. In particular, we can flatten the 2D data $P_{i,j}$ into a vector $v \in \mathbb{R}^{n^2}$ as follows
	\begin{equation}
	v_j = P_{\floor{j/n} \ , \ j \ mod \ n}
	\end{equation}
	for $j = 0,\ldots,n-1$, where $\floor{ \ }$ is the floor operation.
	Conversely if $v \in \mathbb{R}^{n^2}$, we can go back to a 2D image by means of
	\begin{equation}
	P_{i,j} = v_{n \cdot i+j}
	\end{equation}
	with $i,j = 0, \ldots n-1$. Now we can transform the output of the convolution operation using an analogous isomorphism. 
	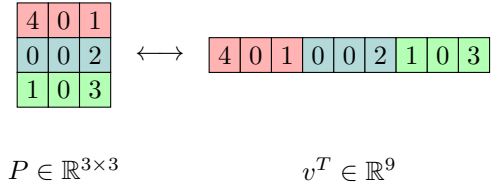
\begin{figure}[htbp]
		\begin{center}
			\begin{tikzpicture}[
			2d-arr/.style={matrix of nodes, row sep=-\pgflinewidth, column sep=-\pgflinewidth, nodes={draw}}
			]
			\matrix (P) [2d-arr] {
				|[fill=red!30]| 4 & |[fill=red!30]| 0 & |[fill=red!30]|1\\
				|[fill=teal!30]| 0 & |[fill=teal!30]| 0 & |[fill=teal!30]| 2\\
				|[fill=green!30]| 1 & |[fill=green!30]| 0 & |[fill=green!30]| 3\\
			};
			\node[below=of P-2-2] {$P \in \mathbb{R}^{3\times3}$};
			\node[right=0.2em of P] (str) {$\longleftrightarrow$};
			
			\matrix (v) [2d-arr, right=0.2em of str, nodes={draw, fill=yellow!30}] {
				|[fill=red!30]| 4 & |[fill=red!30]| 0 & |[fill=red!30]| 1 & |[fill=teal!30]| 0 & |[fill=teal!30]| 0 & |[fill=teal!30]| 2 &  |[fill=green!30]| 1 & |[fill=green!30]| 0 & |[fill=green!30]| 3 \\
			};
			\node[below=of v-1-5] {$v^T\in \mathbb{R}^9$};
			\end{tikzpicture}
		\end{center}
		\caption{A $3 \times 3$ matrix is flattened into a vector of length $9$. The spaces $\mathbb{R}^{3\times3}$ of $3 \times 3$ matrices and $\mathbb{R}^9$ are isomorphic. 
		}
	\end{figure}
	Let us begin revising the standard procedure for $2D$ images. Given an odd $k$, the convolution kernel $\mathcal{K}$ is a $k \times k$ matrix acting on the neighbourhood of a given pixel in the following way. Consider the pixel at the entry $a,b$ of the matrix $P$ representing the image. First, we build a new $k \times k$ matrix $\mathcal{S}$ from $P$ taking the $k \times k$ submatrix centered at $i,j$, namely $\mathcal{S}_{ij}=P_{ij}$ for $i = a-k,a-k+1,\ldots,a+k$ and $j = b-k,b-k+1,\ldots,b+k$. Then we compute the Frobenius product between $\mathcal{K}$ and $\mathcal{S}$. At this point we proceed choosing another pixel to build a new matrix $\mathcal{S}$, a choice depending of the size of the stride. We repeat the procedure until we cover the whole image. If no padding is employed and the dimension of the stride is $d_s$, then we build a sequence of matrices $\mathcal{S}$ centered at the pixel $ \floor{k/2} + \alpha d_s, \floor{k/2} + \beta d_s$ with $\alpha,\beta = 1,2,\ldots,\floor{n/d_s}$. In this case the output $\mathcal{O}$ is a $\floor{n/d_s} \times \floor{n/d_s}$ matrix. See \Cref{fig:convolution} for a visual representation of these operations. If we want to obtain an output matrix of the same dimension of the input one, we can pad the input matrix with zeros (or any other value of choice, depending on the chosen boundary condition) on the border of the input volume. In any case, after we apply some $\mathcal{C}^1$ nonlinear functions on the entries of $\mathcal{O}$, we can flatten the resulting matrix to obtain the output vector. 
	\begin{figure}
		\begin{center}
			\begin{tikzpicture}[
			2d-arr/.style={matrix of nodes, row sep=-\pgflinewidth, column sep=-\pgflinewidth, nodes={draw}}
			]
			\matrix (P) [2d-arr] {
				|[fill=red!30]| 1 & |[fill=red!30]| 0 & |[fill=red!30]| 4 & 0 & 1\\
				|[fill=red!30]| 3 & |[fill=red!30]| 1 & |[fill=red!30]| 0 & 0 & 2\\
				|[fill=red!30]| 7 & |[fill=red!30]| 1 & |[fill=red!30]| 1 & 0 & 3\\
				8 & 1 & 5 & 0 & 0\\
				5 & 2 & 0 & 0 & 0\\
			};
			
			\node[left=of P-1-3] {$\mathcal{S}$};
			\node[below=of P-4-3] {$P$};
			\node[right=0.2em of P] (str) {$*$};
			
			\matrix (K) [2d-arr, right=0.2em of str, nodes={draw, fill=yellow!30}] {
				1 & 0 & 0 \\
				0 & 1 & 0 \\
				0 & 0 & 1 \\
			};
			\node[below=of K-3-2] {$\mathcal{K}$};
			\node[right=0.2em of K] (eq) {$=$};
			
			\matrix (ret) [2d-arr, right=0.2em of eq] {
				|[fill=orange!80!black!30]| 3 & 7\\
				8 & 1\\
			};
			\node[below=4.8em of ret-1-2] {$\mathcal{O} = P * \mathcal{K}$};
			
			\draw[dashed, blue] (P-1-3.north east) -- (K-1-1.north west);
			\draw[dashed, blue] (P-3-3.south east) -- (K-3-1.south west);
			
			\draw[dashed, blue] (K-1-3.north east) -- (ret-1-1.north west);
			\draw[dashed, blue] (K-3-3.south east) -- (ret-1-1.south west);
			\end{tikzpicture}
		\end{center}
		\caption{Pictorial representation of the operations performed by a convolutional layer. $P$ is a $5 \times 5$ matrix representing a monochromatic picture, $\mathcal{K}$ is a $3 \times 3$  convolution kernel and the features map $\mathcal{O}$ is the result of the convolution operation $P * \mathcal{K}$, a $\floor{5/2} \times \floor{5/2} = 2 \times 2$ matrix. The dimension of the stride is $2$ and no padding is employed. To obtain the entry in orange of $\mathcal{O}$, we compute the Frobenius product of the red ($\mathcal{S}$) and yellow ($\mathcal{K}$) matrices.} 
		\label{fig:convolution}
	\end{figure}
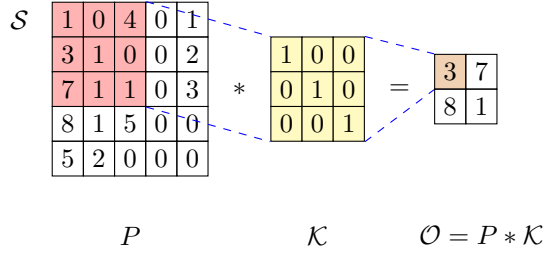
	The operations we employed in this procedure are compositions of linear applications (the flattening of the data) and $\mathcal{C}^1$ functions. Therefore we built a $\mathcal{C}^1$ map between manifolds and can apply the results of \Cref{sec:c_k}.
	\begin{remark}
		We use the construction above to justify the fact that we can apply our framework to convolutional layers. In the numerical experiments of \cref{sec:numerical_experiments} under the Pytorch environment we employ convolutional layers in the usual sense, with both the input and the convolution kernel considered as matrices, since automatic differentiation yields directly the desired result for the pullback on the input data manifold, automatically realizing the isomorphism between matrices and vectors.
	\end{remark}
	The case for RGB images is analogous. Flattening the matrices of each channel into three vectors of length $n^2$ and concatenating them in one vector of $\mathbb{R}^{3n^2}$ -- in other words, flattening the tensor containing the image -- we built an isomorphism between the space of RGB images and $\mathbb{R}^{3n^2}$. At this point the same arguments we employed for monochromatic images apply.

	\begin{remark}\label{remark:pooling}
		It is very common to use a convolutional layer together with a pooling layer to create a downsampled feature map. In our framework we can treat only differentiable activation functions -- the exceptions being the cases discussed in \Cref{sec:extension_non_differentiable}, which includes ReLU and leaky ReLU -- therefore we cannot use max pooling layers, which are not even continuous. However, we can consider average pooling layer or we may decide to employ smooth approximations of the max function, like softmax. For this reason in the numerical experiments concerning convolutional networks we shall make use of average pooling layers.
	\end{remark}
	
	\subsection{Residual blocks}\label{ssec:residual_blocks}
	
	Residual blocks are a group of layers of great importance in machine learning models. This type of blocks induce benefits on both forward and backward propagation \cite{he2016identity}. They are employed, for example, in classification tasks \cite{Deng09} and transformers models including large language model (LLM) as BERT \cite{Tavan21} and GPT models \cite{radford18} or in generative models like the AlphaFold system, employed to predict protein structures \cite{Torr20}. In our framework, we describe the skipping action of the residual blocks via a map acting on the product of two manifolds: the first one retain the data in input of the residual block, whilst the second one follows the forward propagation of the network. This map, realizing the skip connection, can be implemented using a Cartesian product of layers maps and the identity function, defined as follows. Given two functions $f: M_f \rightarrow N_f$ and $g: M_g \rightarrow N_g$ we define their Cartesian product $f \times g : M_f \times M_g \rightarrow N_f \times N_g$ as the map sending $(x,y) \in M_f \times M_g$ to $(f(x),g(y)) \in N_f \times N_g$. Suppose that  there is a residual block skipping $n$ connection whose the input manifold $M_0$ and its output manifold is $M_n$. Then we can create a copy of the input data substituting to the manifolds $M_i$, $i=0,\ldots,n$ the Cartesian product $M_i \times M_0$ and to the layer maps the products $\Lambda_i \times Id$. In this way, the first factor encompasses the layers inside the block, while the identity map retains the information from the input manifold, implementing hence the skipping connection. At the end of the block, under the hypothesis $dim(M_0)=dim(M_n)$, we sum the factors in the output of $\Lambda_n \times Id$, namely $M_n \times M_0$. Note that, in order to give meaning to the sum operation, $M_0$ and $M_n$ must be at least manifolds of the same dimension for which a notion of addition is defined. In practice, both $M_0$ and $M_n$ are manifolds realized as subspaces of the same Euclidean space, therefore the sum of two elements is well-defined.
	
	\begin{remark}
		This strategy, \emph{i.e.}, using the Cartesian product of manifolds and the identity map, can be easily generalized to U-Net architecture with skip connections \cite{benfenati2023neural,ulyanov2018deep}.
	\end{remark}
	
	\begin{figure}[h!]
		\begin{center}
			\includegraphics[height=0.25\textheight]{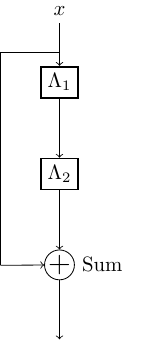}\includegraphics[height=0.25\textheight]{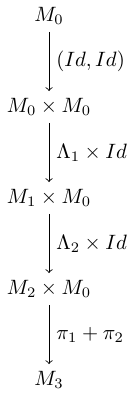}
		\end{center}
		
		\caption{Comparison between the usual visual representation of a residual block with our formalization. Left panel: The input $x$ of the layer $\Lambda_1$ is also merged with the output of the layer $\Lambda_2$, skipping the connections via an identity mapping. Right panel: In our framework we can formalize the skipping connection using product manifolds and tensor products of the layer maps with the identity function. For example the action of $\Lambda_2 \times Id$ on $(x,\Lambda_1(x)) \in M_1 \times Id$ is given by $\Lambda_2 \times Id (x,\Lambda_1(x)) = (\Lambda_2 \circ \Lambda_1(x), x) \in M_2 \times M_0$. The maps $\pi_1$ and $\pi_2$ are the projections on the first and second factor respectively, namely if $(p,q) \in M_2 \times M_0$ then $\pi_1(p,q)=p \in M_2$ and $p_2(p,q)=q \in M_0$.}
		\label{fig:residual}
	\end{figure}
	We formalize the above discussion in the following definition.
	\begin{definition}
		A $\mathcal{C}^1$ residual block of length $k$ starting at $M_i$ and ending at $M_{i+k}$ is a sequence of maps between manifolds of length $k$ of the form 
		\begin{equation*}\label{eq:residual_block}
		\begin{tikzcd}[column sep=large]
		M_{i} \arrow[r, "\textit{(Id,Id)}"] & M_{i} \times M_i \arrow[r, "\Lambda_i \times \textit{Id}"] & M_{i+1} \times M_i \arrow[r,"\Lambda_{i+1} \times \textit{Id}"]  & \cdots \arrow[r,"\Lambda_{i+k-1} \times \textit{Id}"] & M_{i+k-1} \times M_i \arrow[r, "\pi_1 + \pi_2"] & M_{i+k}
		\end{tikzcd}
		\end{equation*}
		where all the maps $\Lambda_i,\ldots,\Lambda_{i+k-1}$ are $\mathcal{C}^1$ and $M_i$, $M_{i+k-1}$ are manifolds of the same dimension realized as subsets of a Euclidean space.
	\end{definition}
	Residual networks fall into the singular Riemannian geometry framework depicted in \cref{sec:geometric_framework}, all the Assumptions are met. Moreover, since the maps $\Lambda_i$ are $\mathcal{C}^1$, for each layer we have a composition of function which is $\mathcal{C}^1$: this means the hypothesis of \cref{prop:smooth} and of \cref{prop:c_1} are satisfied and their results can be applied. The theoretical assumptions for applying \cref{al:SIMECnD} and \cref{al:SIMEXPnD} are met, hence the construction of equivalence class \cite{BeMa21b} can be pursued for this class of networks.

	\subsection{Recurrent layers and recurrent networks}\label{ssec:recurrent_layers_and_networks}
	Finally we focus on recurrent layers and networks, commonly employed in the analysis of time series \cite{Jaeger2004}, handwriting recognition \cite{NIPS2008}, machine translation \cite{sutskever2014}, speech recognition \cite{Santiago2007,Hasim14,Li2015} and text-to-speech synthesis \cite{Fan2015}. Compared to the layers and blocks we considered in the previous sections, recurrent layers exhibit temporal dynamic behaviour. A common way to understand the action of recurrent neural networks is to unfold them in time.
	\begin{figure}
		\begin{center}
			\includegraphics[scale=1]{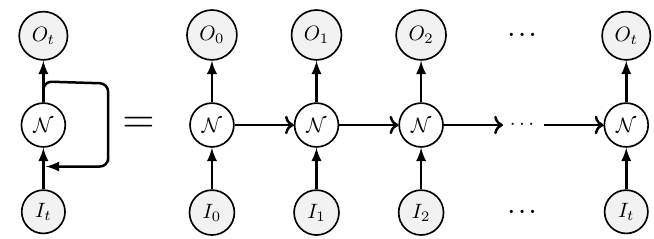}
		\end{center}
		\caption{A recurrent network (left) and its unrolling in time (right). In addition to the input datum $I_t$ at time $t$, also the output produced at the prevous time $O_{t-1}$ is given as input to the network.}
		\label{fig:recurrent_network_unrolled}
	\end{figure}
	\begin{definition}[Recurrent neural network]\label{def:recurrent_network}
		A Fully recurrent neural networks (FRNN) is a sequence of pairs $(\mathcal{N},r_t)$, $r \in \mathbb{N}$, such that:
		\begin{itemize}
			\item[1)] The input manifold $M_0$ is a product manifold of the form $U_0 \times R_0$, where $U_0$ is the current input data manifold and $R_0$ represents a memory manifold, storing the data from the previous time step.
			\item[2)] $r_t \in R_0$ for every $t \in \mathbb{N}$, with $r_0 = 0$.
			\item[3)] For a given input $u \in U_0$
			\begin{equation}
			\begin{cases}
			r_0 = 0, \\
			r_{t} = \pi_2 \mathcal{N}((u,r_{t-1}))
			\end{cases}
			\end{equation}
			where $\pi_2$ is the projection on the second factor of $M_0 = U_0 \times R_0$.
		\end{itemize}
		We call a pair $(\mathcal{N},r_t)$ at a certain time $t$ the state of the network at time $t$.
	\end{definition}
	\begin{figure}[h!]
		\begin{equation}
		\begin{tikzcd}
		\mathcal{O}_0 = R_{1}  & \mathcal{O}_1 = R_{2} & \cdots & \mathcal{O}_i =  R_{i+1}\\
		M_{0} \times R_{0} \arrow[u, "\mathcal{N}"]  & M_{1} \times R_{1} \arrow[u, "\mathcal{N}"] & \cdots & M_{i} \times R_{i} \arrow[u, "\mathcal{N}"] \\
		o_0 = r_{1}  & o_1 = r_{2} & \cdots & o_i =  r_{i+1}\\
		(u_{0}, r_{0}) \arrow[u, mapsto] & (u_{1}, r_{1}) \arrow[u, mapsto] & \cdots & (u_i,r_i) \arrow[u, mapsto] \\
		\textit{State of } \mathcal{N} \textit{ at } t=0 & \textit{State of } \mathcal{N} \textit{ at } t=1 & \cdots & \textit{State of } \mathcal{N} \textit{ at } t=i\\
		\end{tikzcd}
		\end{equation}
		\caption{Calling $\mathcal{O}_t$ the output manifold at time $t$ and $o_t$ the output corresponding to the input $(u_t,r_t)$, the unrolling of the networks in time yields the sequence represented above, as per \Cref{def:recurrent_network}. At each time the function $\mathcal{N}$ maps the pair $(u_t,r_t)$ -- the input at time $i$ and the previous output of the network respectively -- to $o_t = \mathcal{N}((u_t,r_t))$, which then will be employed also in the next timestep. Compare this representation to that given in \Cref{fig:recurrent_network_unrolled}.}
	\end{figure}
	Recurrent layers can be treated in a similar fashion, unrolling them in time. 
	\begin{definition}[Recurrent layer]\label{def:recurrent_layer}
		A recurrent layer is a sequence of pairs $(\Lambda,r_t)$, $r \in \mathbb{N}$, such that:
		\begin{itemize}
			\item[1)] The input manifold $M$ of is a product manifold of the form $U \times R$, where $U$ is the current input data manifold and $R$ represents a memory manifold, storing the data from the previous time step.
			\item[2)] $r_t \in R$ for every $t \in \mathbb{N}$, with $r_0 = 0$.
			\item[3)] For a given input $u \in U$
			\begin{equation}
			\begin{cases}
			r_0 = 0, \\
			r_{t} = \pi_2 \mathcal{N}((u,r_{t-1}))
			\end{cases}
			\end{equation}
			where $\pi_2$ is the projection on the second factor of $M = U \times R$.
		\end{itemize}
		We call a pair $(\Lambda,r_t)$ at a certain time $t$ the state of the layer at time $t$.
	\end{definition}
	From the formalisation of recurrent layer just given, making use of the notion of Cartesian product to store the previous state introduced in \cref{ssec:residual_blocks}, it is clear that -- as in the case of residual blocks -- such kind of networks abide to the framework of \cref{sec:geometric_framework}, and the results from \cref{sec:c_k} are valid.
	\begin{remark}
		An immediate consequence of the definition above is that on a recurrent neural network the equivalence classes on $M_0$ and on the representation manifolds $M_i$ change in time, since the transformation applied by the maps $\Lambda_i$ are depending on the previous state of the network. In other words, each state of the network has its own equivalence classes.
	\end{remark}
	
	For example, let us consider a long short-term memory (LSTM) units, a kind of block composed by a cell remembering previous values over arbitrary time intervals and three gates, often called forget, update and output gates \cite{Hochreiter97}.
	\begin{figure}[h!]
		\centering
		\includegraphics[scale=1]{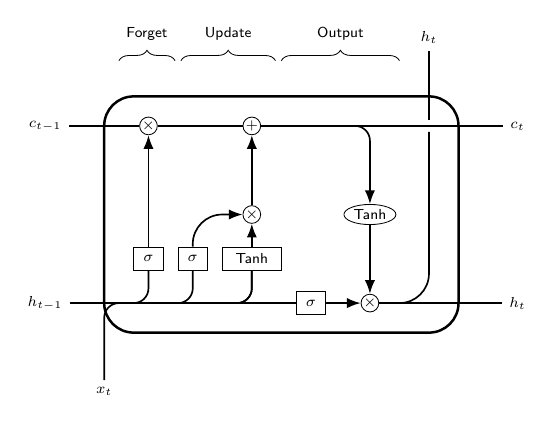}
		\caption{A LSTM unit. $x_t$ is the input of the LSTM unit, $h_t$ is both the current output and the next value of the hidden state, $h_{t-1}$ is the current hidden state -- namely the previous output. $c_{t-1}$ and $c_t$ are the memory cell internal states, allowing the cell to remember. Roughly speaking, the role of the three gates is the following. First, the input gate determines how much of the current input should be added to the current memory cell internal state. The data is then passed on to the forget gate, which decides whether to keep the current value of the memory or flush it. At last the output gate determines how much the memory cell should influence the output at the current time step.}
		\label{fig:lstm}
	\end{figure}
	Referring to \Cref{def:recurrent_layer}, we identify the pair $(c_{t-1},h_{t-1})$ as the data carried on from the previous state, lying in the space $R_t$, while the input manifold is the space to which the input $x_t$ belongs. We can see from the visual inspection given in \Cref{fig:lstm} that a LSTM unit performs only smooth operations on data, namely adding and multiplying data, applying a sigmoid or a hyperbolic tangent function and copying data to be employed for the next step -- which corresponds to make use of the identity function from $\mathbb{R} \rightarrow \mathbb{R} \times \mathbb{R}$. 
	
	\section{Non-differentiable activation functions}\label{sec:extension_non_differentiable}
	\subsection{General considerations}\label{subsec:continuous_1}
	In the last few years several authors studied the inner working of ReLU neural networks focusing on the arrangement of the activation hyperplanes and on the subsequent folding of the data manifold due to nonlinear activation function to understand the transformation implemented by the network \cite{Raghu16,Balestriero19,Hanin19,Rolnick20,he2021relu,Black22,Keup22}.
	Following these papers we study the geometry of the input manifold applying our geometric framework inside the polytopes individuated by the activation hyperplanes. Eventually, the methods we develop can be applied also to the representation manifolds of the inner layers. We extend the results of \Cref{sec:geometric_framework} to some useful cases where the activation functions are merely continuous or presenting jump discontinuities, ReLU being the key case. Functions in the latter case should not be too pathological. By `non pathological' we mean a function which is differentiable almost everywhere, except for a small subset of the domain, in a sense we will specify later. An extreme example of pathological function is given by the Dirichlet function 
	$$
	\chi(x) = 
	\begin{cases}
	1 \textit{ if } x \in \mathbb{Q}\\
	0 \textit{ if } x \in \mathbb{R}\setminus\mathbb{Q}
	\end{cases} 
	$$
	Since $\chi$ is nowhere continuous, in particular it is nowhere differentiable. This means that we cannot even compute the pullback $\chi^* g$ using \Cref{eq:pullback}. On the other hand the ReLU function $\sigma(x)=\max\{0,x\}$ is not differentiable just at $x=0$. Its derivative, a step function, is continuous but for $x=0$, therefore in every point other than $x=0$ the pullback $\sigma^* g$ can be computed and is a continuous function. This reasoning remains true also for the composition of $\sigma$ with a $1 \times n$ matrix $A$, with the difference that the subset of $\mathbb{R}^n$ for which $\sigma \circ A$ is not differentiable is a $n$-dimensional hyperplane passing through the origin. Supported by these examples, we make the following assumption.
	\begin{assumption}
		The activation functions considered hereafter belong to the class ae--$\mathcal{C}^1(\Omega)$, $\Omega\subset\mathbb{R}^n$.
	\end{assumption}
	Except for a set with zero  measure, the pullback can be computed and therefore we can build the null curves as in the previous section. It remains to study the relation between null curves and equivalence classes, which can be influenced by the presence of a null set of points where $f$ is not differentiable. The case of functions with a jump discontinuity is treated in \Cref{sec:jump}, where we treat a particular case satisfying additional assumptions.  We note that the results of \cite{BeMa21a} does not apply globally. In particular the hypotheses of Godement's theorem are not satisfied and we hence cannot conclude that all the class of equivalence are submanifolds of the same dimension, which can vary from a class to another one. Even some simple functions exhibit this behavior.
	\begin{example}[ReLU layer]\label{ex:relu_1}
		Consider a function $f: \mathbb{R}^2 \rightarrow \mathbb{R}$ defined as $f:= \relu \circ A$, with 
		\begin{equation*}
		A= 
		\begin{pmatrix}
		3 \\
		0 \\
		0
		\end{pmatrix}^T
		\end{equation*}
		We want to study the set of the points which are mapped to the same output, namely the set $\mathbb{R}^3/\sim$ with $\sim$ the equivalence relation defined by $q \sim p$ if and only if $f(q)=f(p)$, $p,q \in \mathbb{R}^3$. Let $p$ and $q$ be two points of $\mathbb{R}^3$ such that $f(p)=f(q)$. We claim that the class of equivalence in which $q$ and $p$ belongs are either of dimension $3$ or $2$. Suppose that $p,q$ are such that $Ap > 0$ and $Aq >0$. Then $\relu(q)=\relu(p)$ entails that $Aq=Ap$ or, equivalently, that $p-q \in \Ker(A)$. Since $A$ is a non-null $1 \times 3$ matrix, we know that $\dim \Ker(A) = 2$, from which we conclude that the class of equivalence of $q$ and $p$ is a $2$-dimensional space. Consider now two points $p,q$ such that $Ap\leq 0$ and $Aq \leq 0$. By the definition of $\relu$, $\relu(p)=\relu(q)=0$. Therefore, since the octant $Q$ of $\mathbb{R}^3$ with $x,y,z < 0$ is such that every point $q \in Q$ satisfies $\relu(q)=0$ we conclude that $Q$ is a class of equivalence of dimension $3$.
	\end{example}

	However, in each open, path-connected region of $\mathbb{R}^n$ where the function is differentiable the results of \cite{BeMa21a} and \Cref{sec:c_k} hold true. Indeed, suppose that $f$ is differentiable in some open and path-connected subsets $M_0,M_1,\ldots \subset \mathbb{R}^n$. Then we can define the $\mathcal{C}^1$ maps $f_0,f_1,\ldots$ which are the restrictions of $f$ to $M_0,M_1,\ldots$. Each of these functions lies in the case treated in \Cref{sec:c_k}. In other words, we can apply the results of \Cref{sec:c_k} on each one of these regions, which can be foliated by classes of equivalence possibly of different dimension. We know from \Cref{sec:c_k} that two point in the same class of equivalence are connected by a null curve. The crucial point is to study the converse statement, namely if a null curve starting in one of the region $M_i$ remains in $M_i$. This property can be studied analyzing the rank of the pullback metric $f^*g$.
	\begin{example}[Pullback metric in a ReLU layer]\label{ex:relu_2}
		Consider again the map of \Cref{ex:relu_1}. The pullback of the standard metric of $\mathbb{R}$ with respect to $f$ is
		\begin{equation}
		f^*g = 
		\begin{pmatrix}
		\left(3\relu^\prime\right)^2 & 0 & 0 \\
		0 & 0 & 0 \\
		0 & 0 & 0 \\
		\end{pmatrix}
		\end{equation}
		Notice that a null curve whose three components are positive, is such that the pullback metric is of rank 1, since $\relu^\prime = 1$ in this region. A null curve whose components are negative is such that $\relu^\prime = 0$ and therefore the rank of $f^*g$ is zero. In general, there may exist piecewise $\mathcal{C}^1$ null curves starting in the first region -- corresponding to equivalence classes of dimension $2$ -- and ending in the second one, part of an equivalence class of dimension $3$. Therefore, unlike the $\mathcal{C}^k$ case, null curves alone cannot be employed to reconstruct an equivalence class: In addition we must require that along the curve the pullback metric $f^*g$ is of constant rank. Note that the rank of the pullback metric signals the presence of a point in which $\relu$ is not differentiable, since the sign of $Ap$, $p \in \mathbb{R}^3$, discriminates the two cases discussed above.
	\end{example}
	The general case requires more advanced notions from differential geometry, such as distribution, a kind of generalized function for which the pullback is not always well-defined \cite{Hor83,Mel03}: we limit ourselves to study some commonly employed nonlinear maps of practical interest. 
	
	\subsection{Composition of monotone and  linear applications}\label{subsec:monotone}
	We begin focusing on the simplest generic case of practical interest, namely the composition between a monotone activation function $\sigma : \mathbb{R} \rightarrow \mathbb{R}$ and a linear application represented by a $1 \times n$ matrix, generalizing the example above.
	\begin{proposition}\label{prop:1d-monotone-funct}
		Let $\sigma : \mathbb{R} \rightarrow \mathbb{R}$ be a weakly monotone $\mathcal{C}^0$ function and let $A$ be a $1 \times n$ matrix. Then the class of equivalence of $f = \sigma \circ A $ are either of dimension $n$ or $n-1$.
	\end{proposition}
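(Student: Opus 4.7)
The plan is to exploit the very restrictive structure of $f = \sigma \circ A$: since $A$ is a linear functional (a $1 \times n$ matrix) and $\sigma$ is a weakly monotone continuous map $\mathbb{R} \to \mathbb{R}$, the equivalence class of a point $p$ factors through the one-dimensional preimage $\sigma^{-1}(\sigma(Ap))$, about which weak monotonicity plus continuity tells us a lot.

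First I would dispose of the trivial case $A = 0$: then $f$ is constant, the single equivalence class is all of $\mathbb{R}^n$, and the conclusion (dimension $n$) holds. From now on I assume $A \neq 0$, so that $A : \mathbb{R}^n \to \mathbb{R}$ is surjective with $\ker A$ an affine hyperplane of dimension $n-1$. Next I would observe that for any $p \in \mathbb{R}^n$,
\begin{equation*}
[p] \;=\; f^{-1}(f(p)) \;=\; A^{-1}\bigl(\sigma^{-1}(\sigma(Ap))\bigr),
\end{equation*}
so the dimension of $[p]$ is governed entirely by the one-dimensional set $S_p := \sigma^{-1}(\sigma(Ap)) \subset \mathbb{R}$.

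The key structural step is to note that because $\sigma$ is continuous and weakly monotone, $S_p$ is connected, i.e.\ an interval (possibly degenerate). I would then split into two cases. \emph{Case 1:} $S_p = \{Ap\}$ is a single point (which happens precisely when $\sigma$ is strictly monotone at the value $Ap$). Then $f(q) = f(p)$ forces $Aq = Ap$, so $[p] = p + \ker A$ is an affine hyperplane of dimension $n-1$. \emph{Case 2:} $S_p = [a,b]$ is a nondegenerate interval (which happens precisely when $\sigma$ is constant on a neighborhood of $Ap$). Then $[p] = A^{-1}([a,b])$ is a closed slab bounded by two parallel affine hyperplanes, and since $A$ is a submersion this preimage contains an open set of $\mathbb{R}^n$ and thus has dimension $n$. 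In both cases the dimension is as claimed.

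The only subtle point — really the one step that uses the hypotheses in a nontrivial way — is the classification of level sets of a weakly monotone continuous $\sigma : \mathbb{R} \to \mathbb{R}$ as either a singleton or a nondegenerate interval. This follows from the intermediate value theorem applied to $\sigma$: if $x_1 < x_2$ with $\sigma(x_1) = \sigma(x_2) = c$, then monotonicity forces $\sigma$ to be constantly equal to $c$ on $[x_1, x_2]$, so $S_p$ contains an interval; conversely, continuity of $\sigma$ ensures $S_p = \sigma^{-1}(c)$ is closed, and weak monotonicity rules out any disconnection. Once this lemma is in hand, the rest of the argument is essentially linear algebra on $\ker A$ and preimages of intervals, so there is no real obstacle beyond being careful about the degenerate subcase $A = 0$.
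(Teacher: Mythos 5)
Your proof is correct, and it takes a genuinely different and arguably more robust route than the paper's. The paper argues via the derivative: it splits on whether $\sigma^\prime(Ap)\neq 0$ (local strict monotonicity gives $\sigma(Ap)=\sigma(Aq)\Leftrightarrow Aq=Ap$, hence a class $p+\Ker(A)$ of dimension $n-1$) or $\sigma^\prime(Ap)=0$ (a neighborhood of $Ap$ on which $\sigma^\prime$ vanishes, and the results of the earlier paper give a class of dimension $n$). You instead avoid derivatives entirely: you reduce $[p]$ to $A^{-1}\bigl(\sigma^{-1}(\sigma(Ap))\bigr)$ and classify the one-dimensional level set $\sigma^{-1}(c)$ of a continuous weakly monotone function as a singleton or a nondegenerate closed interval, purely by monotonicity and the intermediate value theorem. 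This buys you something real: the paper's dichotomy on the sign of $\sigma^\prime(Ap)$ tacitly assumes $\sigma$ is differentiable at $Ap$, which a weakly monotone $\mathcal{C}^0$ function need not be (ReLU at the kink, or worse, Cantor-type functions where the non-differentiability set is uncountable), whereas your argument covers all such points uniformly; it also handles the degenerate case $A=0$ explicitly, which the paper subsumes under ``$\sigma$ constant.'' Two small points you should tighten. First, in this paper's framework $[p]$ is the \emph{path-connected component} of the level set (points joined by a curve along which $f$ is constant), not a priori the full preimage $f^{-1}(f(p))$; your identification is nevertheless valid because $A^{-1}(S_p)$ is the preimage of an interval under a linear map, hence convex and path-connected, but this deserves a sentence. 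Second, your parentheticals are slightly off: $S_p$ can be a nondegenerate interval with $Ap$ as an endpoint (again ReLU at $0$), so Case 2 occurs when $\sigma$ is constant on a \emph{one-sided} neighborhood of $Ap$, and $S_p$ may be unbounded rather than of the form $[a,b]$; neither affects the dimension count.
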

	\begin{proof}
		If a continuous function $\sigma$ is weakly monotone, then is either constant or it admits a countable number of points in which it is not differentiable. If $\sigma$ is constant, there is nothing to prove, since every point is mapped to the same result. The space $\mathbb{R}^n$ itself is therefore a class of equivalence of dimension $n$. If $\sigma$ is non-constant, then it may be defined as a piecewise function consisting of increasing, decreasing or constant functions. We focus on the case of increasing or constant functions, the proof for the non-increasing case being the same. Non-decreasing continuous functions can be built in general gluing together constant functions and strictly increasing functions. Let $p \in \mathbb{R}^n$ be a point such that $\sigma^\prime(Ap) \neq 0$. Then, being $\sigma$ monotone, there is a neighborhood $\mathcal{U}_p$ of $p$ for which $\sigma$ is strictly monotone. Consider $q\in\mathcal{U}_p$: therefore $\sigma(Ap) = \sigma(Aq)$ if and only if $Ap=Aq$, namely $q = p +\Ker(A)$, and $\Ker(A)$ is a space of dimension $n-1$. Let now $p \in \mathbb{R}^n$ be a point such that $\sigma^\prime(Ap)=0$. The hypothesis on $\sigma$ entails that there is a neighborhood $U$ of $y=Ap$ in which $\sigma^\prime$ is null. The results of \cite{BeMa21a} entails that the class of equivalence is a submanifold of $\mathbb{R}^n$ of dimension $n$.
	\end{proof}
	An immediate consequence is the following property.
	\begin{corollary}\label{cor:same_class}
		Let $f=\sigma \circ A$ be as in \Cref{prop:1d-monotone-funct}. Then two points in the same class of equivalence are connected by a null curve.
	\end{corollary}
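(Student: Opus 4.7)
The plan is to exhibit an explicit piecewise linear null curve connecting any two points $p,q$ with $\sigma(Ap)=\sigma(Aq)$, and then to show directly that its pseudolength vanishes. I would split into two cases depending on whether $Ap=Aq$ or not.

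In the first case, $q-p\in\Ker(A)$, so the straight segment $\gamma(t)=p+t(q-p)$ satisfies $A\gamma(t)=Ap$ identically, and $f\circ\gamma$ is the constant $\sigma(Ap)$. Since the pullback metric has the form $f^{*}g = (\sigma'\circ A)^{2}\, A^{T}A$ wherever $\sigma'$ is defined, and $A(q-p)=0$, the integrand $g_{\gamma(t)}(\dot\gamma,\dot\gamma)$ is zero at every $t$ where it is defined. In the second case, $Ap\neq Aq$ but $\sigma(Ap)=\sigma(Aq)$; weak monotonicity of $\sigma$ forces $\sigma$ to be constant on the closed interval with endpoints $Ap$ and $Aq$. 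Taking again the straight segment $\gamma(t)=p+t(q-p)$, we have $A\gamma(t)\in[Ap,Aq]$ for every $t\in[0,1]$, whence $\sigma\circ A\circ\gamma$ is constant.

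In both cases the key identity is that $f\circ\gamma$ is constant, so that $\tfrac{d}{dt}(f\circ\gamma)(t)=0$ wherever the derivative exists. Since $Pl(\gamma)=\int_{0}^{1}\sqrt{f^{*}g_{\gamma(t)}(\dot\gamma,\dot\gamma)}\,dt=\int_{0}^{1}\bigl|(f\circ\gamma)'(t)\bigr|\,dt$, the pseudolength vanishes, and $\gamma$ is a null curve connecting $p$ to $q$.

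The only subtle point, which I expect to be the main obstacle, is the possibility that $\sigma$ fails to be differentiable at isolated values of $A\gamma(t)$; for a weakly monotone $\mathcal{C}^{0}$ function these bad values are at most countable, and by the ae--$\mathcal{C}^{1}$ hypothesis they correspond to a set of $t$'s of measure zero along $\gamma$, so they do not contribute to the integral. I would handle this by writing $\gamma$ as a piecewise $\mathcal{C}^{1}$ curve, cutting at the finitely many parameters where $A\gamma(t)$ hits a non-differentiability of $\sigma$, and verifying the vanishing of the pseudolength on each piece separately.
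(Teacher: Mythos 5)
Your proof is correct in substance, but it takes a genuinely different route from the paper. The paper offers no explicit argument at all: it declares the corollary an ``immediate consequence'' of \cref{prop:1d-monotone-funct}, the implicit reasoning being that the proposition exhibits each equivalence class as a path-connected submanifold (locally $p+\Ker(A)$, or a full-dimensional region where $\sigma'\circ A$ vanishes) whose tangent vectors lie in $\Ker(f^*g)$, so any path inside the class is null. You instead produce a single explicit null curve --- the straight segment from $p$ to $q$ --- and verify $Pl(\gamma)=0$ by hand, splitting on whether $Aq=Ap$ or not and using weak monotonicity to force $\sigma$ to be constant on $[Ap,Aq]$ in the second case. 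This is more elementary and more informative: it bypasses the submanifold structure entirely, it exploits the (unstated in the paper) fact that the level sets of $\sigma\circ A$ are convex for weakly monotone $\sigma$, and it actually proves the stronger statement that $f(p)=f(q)$ already implies null-connectedness, which is essentially the nontrivial direction of \cref{prop:one_dimensional_non_smooth_layer}. The identity $f^*g(\dot\gamma,\dot\gamma)=\bigl((f\circ\gamma)'\bigr)^2$ that you use is the correct chain-rule computation for the pullback of $g=(1)$.

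One small caveat on your handling of the non-differentiability set: your claim that the bad parameters form a measure-zero (indeed finite or countable) subset of $[0,1]$ relies on $t\mapsto A\gamma(t)$ being a nonconstant affine function, which holds in your second case but fails in the first, where $A\gamma(t)\equiv Ap$. If $Ap$ happens to be a point where $\sigma$ is not differentiable, the pullback metric is undefined along the \emph{entire} segment, and the ``cut at finitely many parameters'' repair does not apply. This affects only starting points lying on the (measure-zero) activation hyperplanes $\{Ax=c\}$ with $c$ a non-differentiability value of $\sigma$; the paper systematically excises such sets and never addresses this case either, so it is a gap shared with the source rather than a defect specific to your argument, but it is worth stating the exclusion explicitly.
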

	The converse of this statement is not necessarily true. Consider a function $\sigma$ of the form 
	$$\sigma(x) =
	\begin{cases}
	-x & \textit{  if } x \leq 0\\
	2x & \textit{ if } x > 0
	\end{cases}
	$$
	Since $\sigma(x)\neq0$ for every $x \neq 0$, the rank of the pullback is never null, therefore we cannot detect the point in which the function is not differentiable using the pullback metric, in contrast with what happens in \Cref{ex:relu_2}. However, there is a class of activation functions generalizing the ReLU example for which the non differentiability of $\sigma$ is reflected to a change of rank of the pullback metric.
	\begin{proposition}\label{prop:null_continuous_1}
		Let $f=\sigma \circ A$ be as in \Cref{prop:1d-monotone-funct} and assume that $\sigma$ is either of the form
		\begin{equation}\tag{a}
		\sigma(x) =
		\begin{cases}
		0 & \textit{ if } -\infty < x \leq a\\
		h(x) & \textit{ if } x > a
		\end{cases}
		\end{equation}
		with $a \in \mathbb{R}$ and $h(x)$ a differentiable, strictly increasing function such that $\displaystyle \lim_{x\to a^+}h(x)=0$ and $\displaystyle \lim_{x\to a^+}h^\prime(x)=\ell \neq 0$, for some $\ell \in \mathbb{R}$, or of the form 
		Let $\gamma:[a,b]\rightarrow\mathbb{R}^n$ be a null curve. Then two point $p,q$ belonging to different equivalence classes cannot belong to $Im(\gamma)$. The same result holds when $h$ is a strictly decreasing function.
	\end{proposition}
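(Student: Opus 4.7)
The plan is to reduce everything to the behaviour of the scalar function $\phi(s):=A\gamma(s)$ and show that along a null curve $\phi$ either stays in the "flat" region $\{\phi\le a\}$ (where $f$ is identically zero) or remains constant in the "active" region $\{\phi>a\}$. In both cases the image of $\gamma$ lies in a single fibre of $f$, hence in a single equivalence class.

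First I would compute the pullback. Since $f=\sigma\circ A$, at any point where $\sigma$ is differentiable one has $J_f(p)=\sigma'(Ap)\,A$, and the formula \eqref{eq:pullback} gives $(f^{*}g)_p=(\sigma'(Ap))^{2}\,A^{T}A$. This is a rank-one symmetric form whenever $\sigma'(Ap)\neq 0$ and the zero form otherwise. In form (a) one has $\sigma'(x)=0$ for $x<a$ and $\sigma'(x)=h'(x)\neq 0$ for $x>a$, while the assumption $\lim_{x\to a^{+}}h'(x)=\ell\neq 0$ guarantees that $(f^{*}g)$ does not degenerate to rank zero as one approaches the hyperplane $\{Ap=a\}$ from the right. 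Hence on $U:=\{p:Ap>a\}$ the kernel of the pullback is exactly $\ker(A)$, so the null-curve condition $\dot\gamma(s)\in\ker((f^{*}g)_{\gamma(s)})$ forces $A\dot\gamma(s)=0$, i.e.\ $\phi'(s)=0$. On the interior of the complement $\{Ap<a\}$ there is no constraint on $\dot\gamma$, but $f\equiv 0$ there, so every such point belongs to the equivalence class $f^{-1}(0)$.

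The main obstacle is excluding the scenario in which $\gamma$ crosses the hyperplane $\{Ap=a\}$ and lands in a different fibre. To handle this I would argue by contradiction: assume $\phi(s_{0})\le a$ and $\phi(s_{1})>a$ for some $s_{0}<s_{1}$, and set $s^{*}:=\sup\{s\in[s_{0},s_{1}]:\phi(s)\le a\}$. By continuity of $\phi$ the set $\{\phi\le a\}$ is closed, so $\phi(s^{*})\le a$; on the other hand $(s^{*},s_{1}]$ lies in $U$, where on each $C^{1}$ piece of $\gamma$ we have just shown $\phi'\equiv 0$. Piecing these constant values together via continuity of $\phi$ (which holds even at the finitely many breakpoints of the piecewise $C^{1}$ curve) forces $\phi$ to take a constant value $v>a$ on $(s^{*},s_{1}]$; letting $s\to s^{*+}$ then gives $\phi(s^{*})=v>a$, a contradiction. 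The subtle point is precisely the need for $\lim_{x\to a^{+}}h'(x)\neq 0$: without it the pullback could collapse to rank zero at the boundary and $\phi$ could coast over the hyperplane.

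Having ruled out the mixed case, we have a dichotomy: either $\phi(s)\le a$ for every $s$, in which case $f(\gamma(s))=0$ for all $s$ and $\mathrm{Im}(\gamma)\subset f^{-1}(0)$; or $\phi(s)>a$ for every $s$, in which case $\phi$ is constant by the kernel analysis and $f(\gamma(s))=h(v)$ for a single value $v$, so $\mathrm{Im}(\gamma)\subset f^{-1}(h(v))$. In either situation every pair of points on $\gamma$ lies in the same equivalence class, which is the required conclusion. The decreasing-$h$ case is identical after replacing $h$ by $-h$, since the sign does not affect the rank of $(f^{*}g)$ nor the argument above.
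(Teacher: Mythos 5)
Your proof is correct and follows essentially the route the paper intends: the paper's own proof is a one-line reduction to the two regions identified in \Cref{prop:1d-monotone-funct} together with the rank discussion after \Cref{ex:relu_2}, i.e.\ precisely your observation that $(f^*g)_p=(\sigma'(Ap))^2A^TA$ has kernel $\Ker(A)$ in the active region, so that a null curve there keeps $\phi(s)=A\gamma(s)$ constant and can never reach the activation hyperplane, while the inactive region is a single fibre of $f$; your explicit $\sup$-argument at $s^*$ is a welcome spelling-out of the crossing case that the paper leaves implicit. One small overclaim to note: ``differentiable and strictly increasing'' does not force $h'(x)\neq 0$ for every $x>a$ (the hypothesis only controls the limit at $a^+$), so at an interior zero of $h'$ the pullback degenerates to rank zero and the kernel is all of $\mathbb{R}^n$, invalidating the step ``$\phi'\equiv 0$ on $U$'' as written; this is repaired by observing that $(f\circ\gamma)'=h'(A\gamma)\,A\dot{\gamma}$ vanishes identically on any stretch with $A\gamma>a$ in either case, so $f\circ\gamma$ is constant there and injectivity of the strictly increasing $h$ still gives $\phi$ constant.
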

	\begin{proof}
		The statement is a consequence of the results of \Cref{sec:c_k} applied the two regions individuated in \Cref{prop:1d-monotone-funct}, see also the discussion below \Cref{ex:relu_1}.\end{proof}
	\begin{remark}\label{rem:null_curve_and_eq_classes}
		This proposition plays a key role from a numerical point of view. Indeed, when we build a null curve integrating the system
		\begin{equation*}
		\begin{cases}
		\dot{\gamma}(s) = V(x)\\
		\gamma(0) = p
		\end{cases}
		\end{equation*}
		where $p \in \mathbb{R}^n$ the starting point of the null curve and $V(x)$ a vector field in $\Ker(f^*g)$, it may happen that, using any numerical integration algorithm, a null curve starting in an equivalence class of dimension $n-1$, sooner or later pass to another one of dimension $n$. To avoid this scenario, it is sufficient to introduce in the algorithms an additional step checking the dimension of $\Ker(f^*g)$. We also note that in order to select a null eigenvector, we need to know the number of null eigenvectors, thus we already computed $\dim \Ker(f^*g)$. 
	\end{remark}
	We note that the above proposition, encompassing the ReLU case, does not apply to other commonly employed activation functions, such as leaky ReLU.
	\begin{example}[Pullback metric in a leaky ReLU layer]\label{example:leaky_pullback}
		Consider a function $f:\mathbb{R}^3\to\mathbb{R}$, $f=\sigma \circ A$, with
		\begin{equation*}
		\sigma(x) = 
		\begin{cases}
		-0.1 x & \textit{ if } x < 0\\
		x & \textit{ if } x \geq 0
		\end{cases},\quad A = 
		\begin{pmatrix}
		1 \\
		1 \\
		1 \\
		\end{pmatrix}
		\end{equation*}
		Then the pullback metric is given by
		\begin{equation*}
		f^*g = 
		\left( \sigma^\prime \right)^2 
		\begin{pmatrix}
		1 & 1 & 1 \\
		1 & 1 & 1 \\
		1 & 1 & 1
		\end{pmatrix}
		\end{equation*}
		Since $\sigma^\prime$ is never zero, the rank of $f^*g$ is always one. In this case the degeneracy of the metric is not detecting the point in which $\sigma$ is not differentiable. As a consequence, from a numerical point of view, checking the dimension of $\Ker(f^*g)$ is not enough to make sure the a null curve does not pass from an equivalence class to another one.
	\end{example}
	The example shows that in the case of leaky ReLU the singular metric alone is not enough to identity an equivalence class. To extend the previous proposition for a class of functions generalizing leaky ReLU, we need an additional hypothesis.
	\begin{proposition}\label{prop:null_continuous_2}
		Let $f=\sigma \circ A$ be as in \Cref{prop:1d-monotone-funct} and assume that $\sigma$ is a continuous piecewise function built gluing $k$ linear functions, namely that $f$ is of the form
		\begin{equation*}
		f(x) =
		\begin{cases}
		m_1 x + q_1 \textit{ if } -\infty \leq x < a_1\\
		m_2 x + q_2 \textit{ if } a_1 \leq x < a_2\\
		\quad \quad \quad \quad \quad \vdots\\
		m_{k-1} x + q_{k-1} \textit{ if } a_{k-1} \leq x < a_{k-1}\\
		m_{k} x + q_{k} \textit{ if } a_{k} \leq x \leq \infty
		\end{cases}
		\end{equation*}
		where $m_i,q_i \in \mathbb{R}$  with $m_i \neq m_{i+1}$ for every $i=1,\ldots,k-1$ and such that $\lim_{x\rightarrow a_i^-}f(x) = f(a_i)$ for every $i=1,\ldots,k$. Let $\gamma:[a,b]\rightarrow\mathbb{R}^n$ be a null curve such that $f^\prime(\gamma(s))$ is constant $\forall s \in [a,b]$. Then two point $p,q$ belonging to different equivalence classes cannot belong to $Im(\gamma)$.
	\end{proposition}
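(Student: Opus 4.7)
The plan is to show that along $\gamma$ the composition $f\circ\gamma$ is constant; this immediately rules out two points of different equivalence classes lying on $\gamma$. The leverage comes from the hypothesis that $f'(\gamma(s))$ is constant: since $\sigma$ is piecewise linear with distinct consecutive slopes $m_i\neq m_{i+1}$ and $f'(x)=\sigma'(Ax)\,A$, having $f'(\gamma(s))$ constant forces the scalar $A\gamma(s)$ to remain inside a single linearity piece $[a_{i-1},a_i]$ (continuity of $\sigma$ at the break points handles the case where the curve grazes an endpoint). On that piece $\sigma(y)=m\,y+q$ for some fixed slope $m=m_i$ and intercept $q=q_i$.

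Next I compute the pullback explicitly. Since $f$ is a scalar map, a direct application of \Cref{eq:pullback} gives
\begin{equation*}
(f^{*}g)_{ij}(x)\;=\;\bigl(\sigma'(Ax)\bigr)^{2}\,A_{i}A_{j},
\end{equation*}
that is, $f^{*}g=(\sigma'(Ax))^{2}\,A^{\top}A$. Along $\gamma$ this reduces to $m^{2}A^{\top}A$ by the previous paragraph. The null-curve condition $g_{\gamma(s)}(\dot\gamma(s),\dot\gamma(s))=0$ then becomes $m^{2}\bigl(A\dot\gamma(s)\bigr)^{2}=0$ for all $s\in[a,b]$.

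I now split into two cases. If $m=0$, then on the piece $[a_{i-1},a_i]$ the function $\sigma$ is constant, so $f(\gamma(s))=\sigma(A\gamma(s))=q$ is constant along $\gamma$. If $m\neq 0$, the equality above forces $A\dot\gamma(s)\equiv 0$, hence $\tfrac{d}{ds}\bigl(A\gamma(s)\bigr)=0$ and $A\gamma$ is constant on $[a,b]$; consequently $f(\gamma(s))=m\,A\gamma(s)+q$ is constant. In either case $f\circ\gamma$ is constant, so any two points in $\mathrm{Im}(\gamma)$ lie in the same equivalence class, contradicting the existence of $p,q\in\mathrm{Im}(\gamma)$ belonging to different classes.

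The only delicate point I anticipate is the interpretation of the hypothesis "$f'(\gamma(s))$ constant" at a break point $a_i$ of $\sigma$, where $\sigma'$ is not defined. The argument goes through verbatim provided the curve does not switch pieces (so that either a one-sided slope is prescribed, or $A\gamma$ simply touches $a_i$ without crossing); continuity of $\sigma$ at the break points ensures that the value $\sigma(A\gamma(s))$ still equals the constant $m\,A\gamma(s)+q$ at such instants, so no separate treatment is needed. This is the cleanest way to bypass the failure of \Cref{prop:null_continuous_1} illustrated by \Cref{example:leaky_pullback}: the extra hypothesis on $f'$ rules out the pathological transition in which a null curve drifts from one linear regime to another with a different (still nonzero) slope.
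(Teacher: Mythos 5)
Your proof is correct and follows essentially the same route as the paper's: the constancy of $f'(\gamma(s))$, together with $m_i\neq m_{i+1}$ and the continuity of $A\gamma$, confines the curve to a single linear piece of $\sigma$, where the null condition forces $f\circ\gamma$ to be constant. Your version is more explicit than the paper's (which argues by citing \Cref{prop:1d-monotone-funct} and \Cref{prop:null_continuous_1} and tacitly assumes $\sigma'$ is never null), and your separate treatment of the $m=0$ case actually covers a situation the paper's sketch leaves out.
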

	\begin{proof}
		Since $\sigma^\prime$ is never null, form the proof of \Cref{prop:1d-monotone-funct} we know that the equivalence classes are of dimension $n-1$. As a consequence of \Cref{prop:null_continuous_1} the pullback metric alone, of constant rank, is not detecting the points in which $\sigma$ is not differentiable, thus null curves may pass from an equivalence class to another one. The hypoteses on $\sigma$ precludes this possibility, since different equivalence classes correspond to different angular coefficients of $\sigma$: consider $x$ and $y$ such that $\sigma^\prime(x)=m_l, \, \sigma^\prime(y)=m_k$, then $[x]\neq[y]$.
	\end{proof}
	\begin{remark}\label{remark:discontinuity_of_the_metric}
		Observe that, in agreement with the previous proposition, the pullback metric computed in \Cref{example:leaky_pullback} detects the presence of two equivalence classes with a discontinuity of $f^* g$. Indeed, let $q=(q_1,q_2,q_3) \in \mathbb{R}^3$. Then the input of the function $\sigma$ is given by $Aq=q_1+q_2+q_3$. If $Aq<0$, then $\sigma^\prime$ is $-0.1$ otherwise is $1$, therefore
		\begin{equation*}
		f^*g(Aq < 0) = 
		\begin{pmatrix}
		0.01 & 0.01 & 0.01 \\
		0.01 & 0.01 & 0.01 \\
		0.01 & 0.01 & 0.01
		\end{pmatrix}
		\quad
		\textit{while}
		\quad
		f^*g(Aq \geq 0) = 
		\begin{pmatrix}
		1 & 1 & 1 \\
		1 & 1 & 1 \\
		1 & 1 & 1
		\end{pmatrix}
		\end{equation*}
	\end{remark}
	
	At last, we note that \Cref{cor:same_class}, along with \Cref{prop:null_continuous_1} and \Cref{prop:null_continuous_2}, yields a generalization of \cite[Proposition 4]{BeMa21a} to the case of a layer defined making use of non differentiable functions abiding to the hypotheses of \Cref{prop:null_continuous_1,prop:null_continuous_2}.
	
	\begin{proposition}\label{prop:one_dimensional_non_smooth_layer}
		Let $f = \sigma \circ A$, with $A$ a $1 \times n$ matrix and $\sigma : \mathbb{R} \rightarrow \mathbb{R}$ a weakly monotone $\mathcal{C}^0$ function satisfying the hypotheses of either \Cref{prop:null_continuous_1} or \Cref{prop:null_continuous_2}. Let $p,q \in \mathbb{R}^n$. Then $f(p) = f(q)$ if and only if $p$ and $q$ belong to the same equivalence class.
	\end{proposition}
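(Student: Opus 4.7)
My plan is to prove the biconditional by assembling results from this section. I read the right-hand side ``same equivalence class'' as the relation $\sim_g$ on $\mathbb{R}^n$, meaning $p$ and $q$ are connected by a piecewise $\mathcal{C}^1$ null curve for the pullback metric $f^*g$, which is consistent with the formulation used in Propositions \ref{prop:null_continuous_1} and \ref{prop:null_continuous_2}. The forward direction is a direct application of Corollary \ref{cor:same_class}, while the converse is a short case analysis invoking either Proposition \ref{prop:null_continuous_1} or Proposition \ref{prop:null_continuous_2}.

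For the forward implication $f(p) = f(q) \Rightarrow p \sim_g q$, I would simply apply Corollary \ref{cor:same_class}. Its hypotheses are that $\sigma$ is a weakly monotone $\mathcal{C}^0$ function and that $A$ is a $1 \times n$ matrix, both of which are inherited from Propositions \ref{prop:null_continuous_1} and \ref{prop:null_continuous_2}. The corollary then supplies the null curve connecting $p$ and $q$.

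For the converse, let $\gamma:[a,b] \to \mathbb{R}^n$ be a piecewise $\mathcal{C}^1$ null curve with $\gamma(a)=p$ and $\gamma(b)=q$. The pullback metric has the form $f^*g = (\sigma'(A\gamma))^2\, A^\top A$, so the null condition forces, at almost every $s$, either $A\dot\gamma(s)=0$ or $\sigma'(A\gamma(s))=0$. In the first case $A\gamma$ is locally constant; in the second $A\gamma$ lies in a constancy interval of $\sigma$. Either way $f \circ \gamma$ is locally constant, hence by connectedness of $[a,b]$ and continuity of $f \circ \gamma$ one obtains $f(p)=f(q)$. Under the hypothesis of Proposition \ref{prop:null_continuous_1} this coincides with applying that proposition directly; under the hypothesis of Proposition \ref{prop:null_continuous_2} the additional assumption that $f'(\gamma(s))$ be constant can be secured by partitioning $[a,b]$ into subintervals on which $\sigma'(A\gamma)$ is constant and applying the proposition on each piece before stitching the constant values together.

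The main obstacle I expect is the case of Proposition \ref{prop:null_continuous_2} when $\sigma$ admits flat pieces, because there every tangent direction is null and in principle a null curve could cross a breakpoint of $\sigma$. The key tool is the hypothesis $m_i \neq m_{i+1}$, which combined with weak monotonicity prevents two distinct constancy intervals of $\sigma$ from taking the same value; consequently any null curve entering a flat piece at value $c$ and later leaving must exit through a strictly monotone piece, where the null condition immediately forces $A\dot\gamma = 0$ and fixes $A\gamma$ at the boundary value, so $f \circ \gamma$ remains equal to $c$ throughout.
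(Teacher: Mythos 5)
Your proposal is correct and follows essentially the same route as the paper, which offers no standalone proof but derives the proposition exactly as you do: the forward implication from Corollary \ref{cor:same_class} and the converse from Propositions \ref{prop:null_continuous_1} and \ref{prop:null_continuous_2}. Your additional details (the explicit form $f^*g=(\sigma'(A\gamma))^2 A^\top A$, the partition of $[a,b]$ to secure the constant-derivative hypothesis of Proposition \ref{prop:null_continuous_2}, and the treatment of flat pieces) go beyond what the paper records but do not change the argument.
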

	
	\subsection{Layers of generic dimension}\label{subsec:relu_layers}
	The results of the previous section can be applied only on non-smooth layers whose output is of dimension one. Now we relax this hypothesis. In principle we may proceed applying \cref{prop:1d-monotone-funct}, \cref{prop:null_continuous_1}, \cref{prop:null_continuous_2}, \cref{prop:one_dimensional_non_smooth_layer} to each component of the map $\Lambda$ realizing a layer and then intersecting the equivalence classes of the input space we obtained from each component. However \Cref{remark:discontinuity_of_the_metric}, along with the fact that we can apply the machinery of \cite{BeMa21a,BeMa21b} in each region in which the pullback metric is defined, suggests to follow a different path -- We can take exploit the discontinuity of the metric to detect the presence of an activation hyperplane. Before tackling the problem in full generality, we illustrate this strategy using a low-dimensional ReLU layer.
	
	Let $F: \mathbb{R}^3 \rightarrow \mathbb{R}^3$ defined as $F(x,y,z) = (\relu(x),\relu(y),\relu(z))$, and let $A$ be a $3 \times 2$ full rank matrix.
	Consider the layer $\Lambda = F \circ A : \mathbb{R}^2 \rightarrow \mathbb{R}^3$
	\begin{equation}
	(x,y) \mapsto
	\begin{pmatrix}
	\relu(a_{11}x+a_{12}y+b_1)\\
	\relu(a_{21}x+a_{22}y+b_2)\\
	\relu(a_{31}x+a_{32}y+b_3)\\
	\end{pmatrix}
	\end{equation}
	where the biases $b_1,b_2,b_3$ are real numbers.
	Each component of $\Lambda$ satisfies the hypothesis of \Cref{prop:1d-monotone-funct}. In particular the gradient of the $j$-th component $\Lambda^j$ is
	\begin{equation}
	\nabla \Lambda^j =
	\begin{cases}
	(0,0) & \quad \textit{if} \quad a_{j1}x+a_{j2}y+b_j \leq 0\\
	(a_{j1},a_{j2}) & \quad \textit{if} \quad a_{j1}x+a_{j2}y+b_j > 0\\
	\end{cases}
	\end{equation}
	Above the line $a_{j1}x+a_{j2}y+b_j$ the gradient of $\Lambda^j$ is $(a_{j1},a_{j2})$ while below the line is $(0,0)$, therefore we individuate up to $7$ regions in $\mathbb{R}^2$ encompassing all the different combinations for the gradients making up the Jacobian of the layer $\Lambda$.
	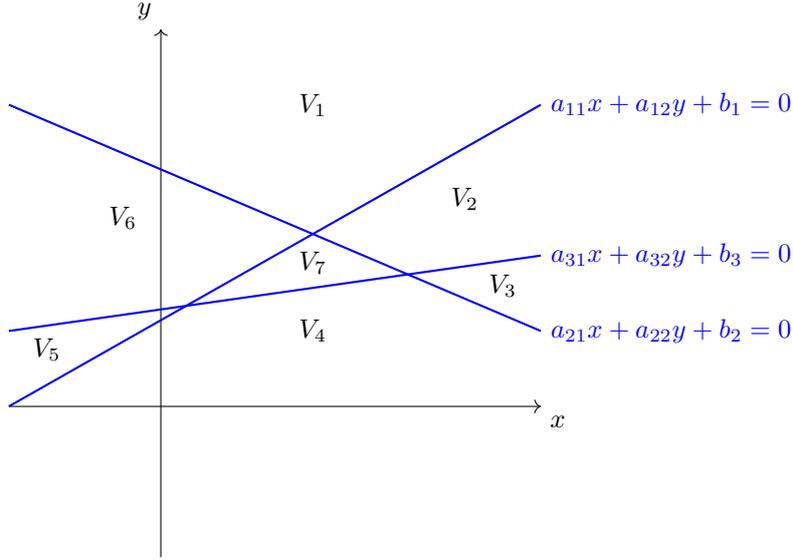
\begin{figure}[htbp]
		\begin{center}
			\begin{tikzpicture}
			\draw[->] (-2,0) -- (5,0) node[anchor=north west] {$x$};
			\draw[->] (0,-2) -- (0,5) node[anchor=south east] {$y$};
			\draw[blue,thick] (-2,0) -- (5,4)  node[anchor=west] {$a_{11}x+a_{12}y+b_1 = 0$};
			\draw[blue,thick] (-2,4) -- (5,1)  node[anchor=west] {$a_{21}x+a_{22}y+b_2 = 0$};
			\draw[blue,thick] (-2,1) -- (5,2)  node[anchor=west] {$a_{31}x+a_{32}y+b_3 = 0$};
			\node[thick] at (2,4) {$V_1$};
			\node[thick] at (4,2.75) {$V_2$};
			\node[thick] at (4.5,1.6) {$V_3$};
			\node[thick] at (2,1) {$V_4$};
			\node[thick] at (-1.5,0.75) {$V_5$};
			\node[thick] at (-0.5,2.5) {$V_6$};
			\node[thick] at (2,1.9) {$V_7$};
			\end{tikzpicture}
		\end{center}
		\caption{Visual inspection of the regions individuated by a ReLU layer coupled with a $3\times2$ matrix.}
		\label{fig:regions}
	\end{figure}
	Denoting with $J\Lambda (V_i)$ the the Jacobian of $\Lambda$ in the seven regions, $i=1,\ldots,7$ we readily find
	\begin{equation*}
	J\Lambda (V_1) =
	\begin{pmatrix}
	a_{11} & a_{12}\\
	a_{21} & a_{22}\\
	a_{31} & a_{32}\\
	\end{pmatrix}
	\quad
	J\Lambda (V_2)=
	\begin{pmatrix}
	0 & 0\\
	a_{21} & a_{22}\\
	a_{31} & a_{32}\\
	\end{pmatrix}
	\quad
	J\Lambda (V_3)=
	\begin{pmatrix}
	0 & 0\\
	a_{21} & a_{22}\\
	0 & 0\\
	\end{pmatrix}
	\end{equation*}
	\begin{equation*}
	J\Lambda (V_4) =
	\begin{pmatrix}
	0 & 0\\
	0 & 0\\
	0 & 0\\
	\end{pmatrix}
	\quad
	J\Lambda (V_5)=
	\begin{pmatrix}
	a_{11} & a_{12}\\
	0 & 0\\
	0 & 0\\
	\end{pmatrix}
	\quad
	J\Lambda (V_6)=
	\begin{pmatrix}
	a_{11} & a_{12}\\
	a_{21} & a_{22}\\
	0 & 0\\
	\end{pmatrix}
	\end{equation*}
	\begin{equation*}
	J\Lambda (V_7) =
	\begin{pmatrix}
	0 & 0\\
	0 & 0\\
	a_{31} & a_{32}\\
	\end{pmatrix}
	\end{equation*}
	We can distinguish the regions looking at which gradients are null in the Jacobian matrix. In the interior of each region $\Lambda$ is smooth, therefore we can apply the results of \cite{BeMa21a,BeMa21b}. 
	Suppose now to consider $\mathbb{R}^3$ endowed with its Euclidean metric $g=diag(1,1,1)$. Computing the pullback through $\Lambda$ in the different regions yields
	\begin{equation*}
	h(V_1) =
	\begin{pmatrix}
	a_{11}^2+a_{21}^2+a_{31}^2 & a_{11} a_{12}+a_{21} a_{22}+ a_{31}a_{32} \\
	a_{11} a_{12}+a_{21} a_{22}+ a_{31}a_{32} & a_{12}^2+a_{22}^2+a_{32}^2
	\end{pmatrix}
	\end{equation*}
	\begin{equation*}
	h(V_2) =
	\begin{pmatrix}
	a_{21}^2+a_{22}^2 & a_{21} a_{22}+a_{31} a_{32} \\
	a_{21} a_{22}+a_{31} a_{32}  & a_{31}^2+a_{32}^2
	\end{pmatrix}
	\end{equation*}
	\begin{equation*}
	h(V_3) =
	\begin{pmatrix}
	a_{21}^2 & a_{21} a_{22} \\
	a_{21} a_{22}  & a_{22}^2
	\end{pmatrix}
	\quad
	h(V_4) =
	\begin{pmatrix}
	0 & 0 \\
	0 & 0
	\end{pmatrix}
	\quad
	h(V_5) =
	\begin{pmatrix}
	a_{11}^2 & a_{11} a_{12} \\
	a_{11} a_{12}  & a_{12}^2
	\end{pmatrix}
	\end{equation*}
	\begin{equation*}
	h(V_6) =
	\begin{pmatrix}
	a_{11}^2+a_{12}^2 & a_{11} a_{12}+a_{21} a_{22} \\
	a_{11} a_{12}+a_{21} a_{22}  & a_{21}^2+a_{22}^2
	\end{pmatrix}
	\quad
	h(V_7) =
	\begin{pmatrix}
	a_{31}^2 & a_{31} a_{32} \\
	a_{31} a_{32}  & a_{32}^2
	\end{pmatrix}
	\end{equation*}
	Note that in the pullback metric the transition from a region to another may yield a discontinuity of at least one of the entries, for example going from region $V_3$ to region $V_4$ makes the first entry of the first row go from $a_{21}^2$ to $0$. In general, since the composition of discontinuous functions is not necessarily discontinuous, the pullback metric may be continuous also on some of the activation lines, allowing us to join two regions and to consider the pullback metric defined on their union. However, \Cref{prop:one_dimensional_non_smooth_layer} is precluding that points of the same equivalence class can be in two (or more) different regions, as we will discuss in the proof of the next result. In the example above, where we employed the ReLU function, this means that we cannot find any weight matrix $A$ for which the pullback metric is the same in two regions sharing a boundary, i.e. it cannot happen that $h(V_4)=h(V_7)$ -- otherwise we could find a null curve starting in $V_4$ and ending in $V_7$, in violation of \Cref{prop:one_dimensional_non_smooth_layer}. Now we prove the following general proposition, encompassing ReLU activation functions with input and output spaces of any dimensions.
	\begin{proposition}\label{prop:relu_equivalence_classes}
		Let $A$ be a $m \times n$ matrix, $m,n \in \mathbb{N}$ and let $\sigma : \mathbb{R}^n \rightarrow \mathbb{R}^n$ be a function whose components satisfy the hypotheses of \Cref{prop:null_continuous_1}. Set $\Lambda = \sigma \circ A$. Then the pullback of a (degenerate) Riemannian metric $g$ through $\Lambda$ induces equivalence class of dimensions at most $n$ in the input space $\mathbb{R}^m$ removed of a set of null measure on which $\Lambda$ is not differentiable, where the pullback metric $\Lambda^* g$ is discontinuous.
	\end{proposition}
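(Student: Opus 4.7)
The strategy is to decompose the input space along the activation hyperplanes, apply the smooth theory of \Cref{prop:c_1} inside each resulting region, and then use the one-dimensional result \Cref{prop:one_dimensional_non_smooth_layer} component-wise to forbid equivalence classes from crossing between regions.

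First, I would identify the activation hyperplanes. Writing $A_j$ for the $j$-th row of $A$ and $\sigma_j$ for the $j$-th component of $\sigma$, each component map is $\Lambda_j(x) = \sigma_j(A_j x)$. By the hypotheses of \Cref{prop:null_continuous_1}, $\sigma_j$ fails to be $\mathcal{C}^1$ only at a single point $a_j \in \mathbb{R}$, so $\Lambda_j$ is non-differentiable exactly on the affine hyperplane $H_j = \{x : A_j x = a_j\}$. Hence $\Lambda$ itself fails to be $\mathcal{C}^1$ exactly on $\mathcal{H} = \bigcup_j H_j$, a finite union of hyperplanes, which has Lebesgue measure zero. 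This already establishes the null-set claim of the statement.

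Second, the complement of the arrangement $\mathcal{H}$ decomposes into finitely many open connected polytopes $V_1, \ldots, V_N$. Inside each $V_i$, every $\sigma_j$ coincides with either the constant $0$ or the smooth branch $h_j$ from \Cref{prop:null_continuous_1}, so $\Lambda|_{V_i}$ is $\mathcal{C}^1$. Therefore the pullback metric $\Lambda^* g$ is well-defined and continuous on each $V_i$, and \Cref{prop:c_1} applies: the connected components of the level sets of $\Lambda|_{V_i}$ are path-connected $\mathcal{C}^1$ submanifolds of $V_i$ whose tangent spaces are contained in $\Ker(\Lambda^* g)$, and whose dimension equals the input dimension minus $\RANK(J\Lambda|_{V_i})$. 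In particular their dimensions are bounded by the input dimension.

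Third, I would show that two points in distinct regions $V_i$ and $V_j$ are never globally equivalent. Indeed, $V_i$ and $V_j$ are separated by at least one activation hyperplane $H_k$, so $A_k p - a_k$ and $A_k q - a_k$ have opposite (non-zero) signs, or exactly one is zero. Applying the one-dimensional \Cref{prop:one_dimensional_non_smooth_layer} to the scalar layer $\sigma_k \circ A_k$ gives $\Lambda_k(p) \neq \Lambda_k(q)$, hence $\Lambda(p) \neq \Lambda(q)$. This forces every global equivalence class to lie inside a single $V_i$ and therefore inherit the submanifold structure from the previous step. Finally, for the discontinuity of $\Lambda^* g$ across $\mathcal{H}$, fix a point on $H_k$ and approach it from two adjacent polytopes: the $k$-th row of $J\Lambda$ jumps from $0$ on the flat side of $\sigma_k$ to $\ell A_k$ on the smooth side, where $\ell = \lim_{x \to a_k^+} h_k'(x) \neq 0$, and using the pullback formula \eqref{eq:pullback} the contribution of the $k$-th component to $(\Lambda^* g)_{hl}$ jumps additively by $\ell^2 (A_k)_h (A_k)_l g_{kk}$, which is generically nonzero.

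The main obstacle I expect is the third step: one must carefully check that when $p$ and $q$ lie in different polytopes, there really is a component index $k$ on which the one-dimensional separation argument applies. Some bookkeeping is required in the edge case where the distinguishing component has $p$ or $q$ located exactly on an activation hyperplane, but this is handled by a limit argument combined with the strict monotonicity of the smooth branches $h_k$. All other steps are essentially bookkeeping on top of results already established in the paper.
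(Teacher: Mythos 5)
Your proposal is correct and follows essentially the same route as the paper's proof: decompose $\mathbb{R}^m$ into the open polytopes cut out by the activation hyperplanes, apply the smooth theory of \Cref{prop:c_1} inside each cell, and invoke \Cref{prop:one_dimensional_non_smooth_layer} component-wise (via the component that switches to its flat branch) to show an equivalence class cannot straddle two cells. Your explicit computation of the jump in $\Lambda^* g$ across a hyperplane is slightly more quantitative than the paper's treatment, but the underlying argument is the same.
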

	\begin{proof}
		The $n$ rows of the linear map $A$ individuate at most $m$ hyperplanes in $\mathbb{R}^m$, with exactly $m$ hyperplanes when $A$ is of full rank. The intersections of these hyperplanes form at most $\mathcal{I} = \sum_{i=0}^m {n \choose i}$ (eventually unbounded) polytopes $\mathcal{P}_i$ in $\mathbb{R}^m$. Since by hypothesis $\sigma$ is continuous on $\mathbb{R}^m$ and differentiable everywhere except for the points of the hyperplanes, inside each polytope we can compute the pullback of the metric $g$. On the hyperplanes $J\Lambda$ is discontinuous, therefore $\Lambda^* g$ may not be defined on these points. Suppose first that $\Lambda^* g$ is not defined on every hyperplane. In this case any equivalence class is completely contained in one and only one polytope -- since we cannot cross the hyperplanes when constructing null curves, two points in two different polytopes cannot be equivalent. If $\Lambda^* g$ is not identifying points inside the polytopes, then each region $\mathcal{P}_i$ consists of a class of equivalence. Otherwise, if $\Lambda^* g$ is degenerate, some of the $\mathcal{I}$ polytopes can be foliated by their own equivalence classes, whose dimension can vary from $1$ to $n$ depending on the rank of the pullback metric. We conclude that the input space $\mathbb{R}^m$ can be decomposed as the union of the hyperplanes, a null set on which $\Lambda^* g$ is discontinuous, and the equivalence classes induced by the metric, eventually of different dimensions if the rank of $\Lambda^* g$ is not constant. It remains to show that even if $\Lambda^* g$ is defined and continuous on some of the hyperplanes, then an equivalence class cannot cross them, reducing the core of the proof to the previous case. Consider two polytopes $\mathcal{P}_1$ and $\mathcal{P}_2$ such that their common boundary $\partial_{12} = \overline{\mathcal{P}_1} \cap \overline{\mathcal{P}_2}$ is a $n$-dimensional subset of one of the activation hyperplanes. Suppose that the pullback metric can be extended continuously to their common boundary $\partial_{12}$. If this was the case we could define a new region $\mathcal{U} = \mathcal{P}_1 \cup \mathcal{P}_2 \cup \partial_{12}$ on which each component of the extended metric $\Lambda^* g$ would be continuous. On the other hand, an equivalence class in $\mathcal{U}$ is also the intersection of the equivalence classes relative to all the components of the map $\Lambda$, for each of which \Cref{prop:one_dimensional_non_smooth_layer} applies. In particular this means that, since $\Lambda(x) \neq \Lambda (y) \ \forall x \in  {\mathcal{P}_1}, \ y \in \mathcal{P}_2$ by definition of activation hyperplane -- changing polytopes at least one component becomes zero -- then a point of $\mathcal{P}_1$ cannot be equivalent to a point in $\mathcal{P}_2$.
	\end{proof}
	\begin{remark} We enlist some remarkable observation on \cref{prop:one_dimensional_non_smooth_layer}:
		\begin{enumerate}
			\item The previous result remains true also considering, instead of a matrix, an affine map of the form $v \mapsto Av + b$, with $A$ a $m \times n$ matrix and $b \in \mathbb{R}^n$.
			\item We note that another trivial consequence of \Cref{prop:one_dimensional_non_smooth_layer} applied componentwise is that all the points in the same equivalence class are mapped to the same output.
			\item In the case of layers employing the ReLU activation function, being the pullback metric constant in each polytope, the reasoning at the end of the previous proposition rules out the possibility that in two consecutive polytopes the metric can be the same. Indeed, if this was the case then we could find equivalence classes lying in two different polytopes, in contrast with \Cref{prop:one_dimensional_non_smooth_layer}. This fact allows to make use of the discontinuity of the metric to detect the activation hyperplanes. \label{remark:jump_discontinuity}
		\end{enumerate}
		
	\end{remark}
	
	The arguments above can be applied also to composition of $n$ non-smooth layers in a network $\mathcal{N}$, since \Cref{prop:relu_equivalence_classes} holds true also for degenerate Riemannian metrics. Indeed we can apply \Cref{prop:relu_equivalence_classes} layer by layer starting from the last layer and proceeding backwards. The first time we compute the pullback of the final Riemannian metric $g_n$, possibily obtaining a degenerate metric $g_{n-1} = \Lambda_n^* g_n$, and we characterize the equivalence classes of the representation space $M_{n-1}$. At each subsequent iteration we compute the pullback of a degenerate metric obtaining the degenerate metrics of the various representation layers, continuing until we get to the input layer and therefore to the metric $g_0$. The result of this procedure is tantamount to consider the pullback metric $\mathcal{N}^*g$ directly. This observation immediately leads to the following proposition.
	\begin{proposition}\label{prop:relu_network_equivalence_classes}
		Let $\mathcal{N}$ be a deep neural network as per \Cref{def:neural_netowork},
		where we assume that $\{\Lambda_i\}$ may also be ReLU layers satisfying the hypotheses in \Cref{prop:null_continuous_1,prop:relu_equivalence_classes}. Let $d_i$ be the dimension of the manifolds $M_i$. Then the pullback of a Riemannian metric $g_n$ through the neural network map $\mathcal{N}$ induces equivalence class of dimensions at most $\max_{i=0,\ldots,n}{d_i}$ in the input space $M_0$ removed of a set of null measure on which $\mathcal{N}$ is not differentiable, where the pullback metric $\mathcal{N}^* g$ is discontinuous.
	\end{proposition}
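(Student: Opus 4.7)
The plan is to argue by backward induction on the layers, exploiting the functoriality of the pullback observed in the paragraph preceding the statement: namely that $\mathcal{N}^* g_n = \Lambda_1^* \Lambda_2^* \cdots \Lambda_n^* g_n$. This lets me invoke \Cref{prop:relu_equivalence_classes} at each ReLU layer individually (and the results of \Cref{sec:c_k} at each $\mathcal{C}^1$ layer), treating the possibly degenerate metric transported from the deeper layers as the target metric. Concretely, I set $g^{(i)} := (\Lambda_{i+1} \circ \cdots \circ \Lambda_n)^* g_n$, so that $g^{(n)} = g_n$ is Riemannian and $g^{(0)} = \mathcal{N}^* g_n$ is what I must analyze.

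The base case is $M_n$, where $g^{(n)}$ is the given Riemannian metric and the equivalence classes are single points. For the inductive step, assume that $g^{(i)}$ is defined on $M_i \setminus Z_i$ (with $Z_i$ a closed set of measure zero) and that on each connected component of $M_i \setminus Z_i$ the equivalence classes induced by $g^{(i)}$ are submanifolds of dimension at most $\max_{j \geq i} d_j$. If $\Lambda_i$ is a $\mathcal{C}^1$ submersion, \Cref{prop:c_1} applies and pulling back preserves the dimensional bound, with $Z_{i-1} := \Lambda_i^{-1}(Z_i)$. If $\Lambda_i$ is a ReLU-type layer, I apply \Cref{prop:relu_equivalence_classes} componentwise with $g^{(i)}$ playing the role of the target metric: the activation hyperplanes partition $M_{i-1}$ into polytopes $\mathcal{P}_\alpha$, on each of which $\Lambda_i$ is affine (hence $\mathcal{C}^1$), and the equivalence classes inside each polytope have dimension at most $d_{i-1}$. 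Updating $Z_{i-1}$ to include also the activation hyperplanes of $\Lambda_i$ together with $\Lambda_i^{-1}(Z_i)$ closes the induction.

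The main obstacle, and the step that demands the most care, is that \Cref{prop:relu_equivalence_classes} was stated assuming a genuine Riemannian target metric, whereas the inductive hypothesis only provides a positive semidefinite $g^{(i)}$. Inspecting that proof, however, shows that positive definiteness is never used: the argument rests only on (a) the existence of a well-defined pullback tensor in each polytope, which is automatic for any $(0,2)$-tensor field, and (b) the fact that two points in distinct polytopes cannot be equivalent because at least one component of $\Lambda_i$ becomes identically zero on one polytope and not the other, which is exactly the componentwise content of \Cref{prop:one_dimensional_non_smooth_layer}. Hence the proposition extends verbatim to degenerate targets. The remaining delicate point is showing that $Z_0$ is still of measure zero: $Z_0$ is the finite union, over $i$, of preimages through $\Lambda_1 \circ \cdots \circ \Lambda_{i-1}$ of the activation hyperplanes of $\Lambda_i$. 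Each such hyperplane is of codimension one, and since the earlier layers are piecewise affine or $\mathcal{C}^1$ submersions, their preimages are locally finite unions of codimension-one subsets, hence null; ruling out the pathological case in which an entire polytope collapses into a hyperplane requires the full-rank hypothesis recalled after \Cref{def_layer}. Assembling the bounds gives equivalence classes of dimension at most $\max_{i=0,\ldots,n} d_i$ off a null set where $\mathcal{N}^* g_n$ is discontinuous, as claimed.
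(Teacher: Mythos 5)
Your proof takes essentially the same route as the paper: the paper's own argument is precisely the backward, layer-by-layer pullback, invoking \Cref{prop:relu_equivalence_classes} with the (possibly degenerate) metric transported from the deeper layers and observing that that proposition survives degeneracy of the target metric. Your extra bookkeeping of the null sets $Z_i$ and the explicit check that positive definiteness is never used in \Cref{prop:relu_equivalence_classes} are more careful than the paper's one-paragraph sketch, but they do not alter the approach.
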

	At last we note that leaky ReLU activation layers can be treated in a similar fashion, since the activation hyperlanes are the same as the corresponding ReLU ones.
	
	\subsection{Numerical reconstruction of equivalence classes}
	\label{sec:numerical_reconstruction_non_differentiable}
	In this section we adapt the SiMEC algorithm to work with ReLU and leaky ReLU layers. The findings of the previous section, in particular \cref{prop:relu_equivalence_classes,prop:relu_network_equivalence_classes} allow us to employ the SiMEC algorithm as presented in \cite{BeMa21b} ($1D$ version) or in \cref{sec:random_walks_on_equivalence_classes} ($nD$ version) directly, since an equivalence class is entirely contained in a region in which the layers are smooth. However, after a certain number of iterations the numerical errors may lead the reconstructed curve from one polytope to another one. For this reason we propose some improved versions of the algorithm aimed to avoid, or at least to mitigate, these effects. 
	For maps abiding to the hypotheses of \Cref{prop:null_continuous_2}, as a fully connected layer with leaky ReLU activation function, the SiMEC algorithm must be modified as in \Cref{al:SIMEC1D_cont_2}.
	\begin{algorithm}[H]
		\begin{algorithmic}
			\Require {Choose $p_0 \in M_0$, the direction $v_0$, $\delta>0$, maximum number of iteration $K$.}
			\Ensure  {A sequence of points $\{p_s\}_{s=1,\dots,K}$ approximatively in $[p_0]$; The energy $E$ of the approximating polygonal}
			\State{Initialise the energy: $E\gets 0$}
			\State {Compute $g_{\mathcal{N}(p_k)}^n$}
			\State {Compute the pullback metric $g^0_{p_k}$ trough \Cref{eq:pullback}}
			\State {Diagonalize $g^0_{p_k}$ and find the eigenvectors}
			\State{$dimKer \gets$ Number of null eigenvectors}
			\For {$k=1,\dots,K-1$}
			\State {Choose a null eigenvector $w_0$}
			\State {$v_{k} \gets w_0$}
			\If {$v_{k} \cdot v_{k-1}<0$}
			\State {$v_{k} \gets-v_{k}$}
			\EndIf
			\State {Compute the new point $p_{k+1} \leftarrow p_{k}+ \delta v_{k}$}
			\State {Add the contribute of the new segment to the energy $E$ of the polygonal}
			\State {Compute $g_{\mathcal{N}(p_k)}^n$}
			\State {Compute the pullback metric $g^0_{p_k}$ trough \Cref{eq:pullback}}
			\State {Diagonalize $g^0_{p_k}$ and find the eigenvectors}
			\If {Number of null eigenvectors $\neq dimKer$ or $\sigma^\prime(Ap_k) \neq \sigma^\prime(Ap_{k-1})  $}
			\State{\Break}
			\EndIf
			\EndFor
		\end{algorithmic}
		\caption{The $1D$ SiMEC algorithm for leakyReLU-like layers.}\label{al:SIMEC1D_cont_2}
	\end{algorithm}
	We now consider in \cref{al:SIMECnD_modified} the general case treated in \Cref{subsec:relu_layers}. As noted in \eqref{remark:jump_discontinuity} of \cref{remark:jump_discontinuity}, passing through an activation hypersurface yields a jump discontinuity in the pullback metric. Since a jump discontinuity of the pullback metric detects the transition between two different regions, we can make sure to remain in the region containing the equivalence class of the starting point checking the variations of the entries of $h = \Lambda^* g$. From a numerical point of view we can detect this kind of discontinuity making use of a threshold parameter $\tau$ saying that if $|h_{ij}(x)-h_{ij}(y)| > \tau$ then $x$ and $y$ belongs to different regions. If we use a small enough step $\delta = |x-y|$, a suitable choice for the threshold parameter $\tau$ could be $\tau = L \delta$ with $L$ the maximum Lipschitz constant of $\Lambda$ among the different regions. We remark that when using linear operator, such as in Fully Connected layers or convolutional ones, due to linearity they are also bounded, hence one may have a reliable estimation of the constant $L$; indeed, we emphasize that this framework considers trained networks, whose weights are fixed, hence for linear operators $l$ can be easily computed.
	\begin{algorithm}[H]
		\begin{algorithmic}
			\Require {Choose $p_0 \in M_0$, $\delta>0$, threshold parameter $\tau$, maximum number of iterations $K$.}
			\Ensure  {A sequence of points $\{p_s\}_{s=1,\dots,K}$ approximatively in $[p_0]$; The energy $E$ of the approximating polygonal}
			\State{Initialise the energy: $E\gets 0$}
			\For {$k=1,\dots,K-1$}
			\State {Compute $g_{\mathcal{N}(p_k)}^n$}
			\State {Compute the pullback metric $g^0_{p_k}$ trough \Cref{eq:pullback}}
			
			\If {For some $i,j=1,\cdots,dim(M_0) \ |{g^0_{p_k}}_{ij} - {g^0_{p_{k-1}}}_{ij}|>\tau$}
			\State {End loop}
			\EndIf			
			
			\State {Diagonalize $g^0_{p_k}$ and find the eigenvectors}
			\State {$v_k \gets$ Choose a random linear combinations of the null eigenvectors}
			\State {Compute the new point $p_{k+1} \leftarrow p_{k}+ \delta v_{k}$}
			\State {Add the contribute of the new segment to the energy $E$ of the polygonal}
			\EndFor
		\end{algorithmic}
		\caption{Modified SiMEC $nD$ random walk algorithm with non differentiable activation functions}\label{al:SIMECnD_modified}
	\end{algorithm}
	
	This algorithm can be employed also to build random walks in the representation manifolds or in the input manifold of a neural network, in accordance with \Cref{prop:relu_network_equivalence_classes}.
	It is worth noticing, however, that this modification of \Cref{al:SIMECnD} is slower than the original version, since the continuity check is $\mathcal{O}(n_0^2)$, with $n_0 = dim(M_0)$. For the same reason the proposed algorithm is more subject to the curse of the dimensionality compared to the original one. Therefore in high dimensional spaces it may be more convenient to employ \Cref{al:SIMECnD} directly. This claim is also supported by the fact that the numerical errors in the construction of the polygonal approximating the null curve may not allow to distinguish between regions with different dimensions anyway -- In general due to the approximations, the curves we build can build lie in the union of several equivalence classes which are close to each other, namely the outputs of the network along the points of the curve are very close but not exactly the same.
	\begin{remark}
		Layers built using the leaky ReLU activation function can be treated in a similar fashion.
	\end{remark}
	
	\subsection{Activation functions with a jump discontinuity}\label{sec:jump}
	At last we briefly discuss how to extend the previous results to a certain class of activation functions with a jump discontinuity. We begin with an example showing that, in general, the pullback metric $f^*g$ carries very little information if we consider an activation function with a jump discontinuity.
	\begin{example}
		Consider a function $f:\mathbb{R}^2 \rightarrow R$ defined as $f := \Theta \circ A$ with $\Theta$ the Heaviside step function
		\begin{equation*}
		\Theta(x) =
		\begin{cases}
		1 \textit{ if } x \geq 0\\
		0 \textit{ if } x < 0
		\end{cases} \quad \textit{and} \quad
		A= 
		\begin{pmatrix}
		3 \\
		0 \\
		0
		\end{pmatrix}^T.
		\end{equation*}
		Since the derivative of $\Theta$ is zero except for $x=0$, where it does not exists, the pullback metric $f^*g$ is the null matrix almost everywhere, therefore the pullback metric is not detecting the jump of the activation function.
	\end{example}
	However, for the activation functions considered in \Cref{prop:null_continuous_1} the continuity hypothesis at the point in which we glue together the different maps can be relaxed. Indeed, we can admit a jump discontinuity and in general we can substitute the constant zero function with a constant non-zero function. Furthermore, we can consider also functions of the form
	$$\sigma(x) =
	\begin{cases}
	\alpha \textit{ if } -\infty \leq x < a\\
	h_1(x) \textit{ if } a \leq x \leq b\\
	\beta \textit{ if } x < a
	\end{cases}
	$$
	where $\alpha,\beta,a,b\in\rd$, $\alpha < \beta\, (\alpha > \beta)$, $a<b \, (a>b)$, with $\alpha \leq h_1(a) < h_1(b) \leq \beta$ $(\alpha~\geq~h_1(a)~>~h_1(b)~\geq~\beta)$,  and $h_1(x)$ a differentiable, strictly increasing (decreasing) function.
	In this particular case the statement of \Cref{prop:1d-monotone-funct} continues to hold true repeating the proof in the three regions. Furthermore, noting that the two regions in which $f$ is constant cannot be contiguous entails the analogous of \Cref{prop:null_continuous_1}. Furthermore, for this kind of activation functions, if we require that the right and left limits of $h_1$  and $h_2$ are not zero, then the results of \Cref{subsec:relu_layers} keep to hold true and the SiMEC and SiMExp algorithm can be applied.
	
	\section{Numerical experiments and applications}\label{sec:numerical_experiments}
	
	The numerical experiments in this section have been written using PyTorch 2.1.2, on a machine equipped with Linux Ubuntu 22.04.1 LTS, an Intel i7-10700k CPU, 24 GB of RAM and a NVIDIA RTX 2060 SUPER GPU. The code can be found at \url{https://github.com/alessiomarta/extension_singular_riemannian_framework_code}.

\subsection{Equivalence classes of ReLU and leaky ReLU layers}\label{subsec:relu_and_leaky_relu}
\paragraph{ReLU activation function}
We present some numerical experiments concerning the ReLU activation function, which is not differentiable at the origin. We begin with a numerical experiment in $\mathbb{R}^2$. Let us consider $f=\relu \circ A$ with 
$$
A= 
\begin{pmatrix}
1 \\
-1
\end{pmatrix}^T.
$$ 
We run the SiMEC algorithm for $5000$ steps, with $\delta = 1\cdot 10^{-2}$ and $\epsilon = 1 \cdot 10^{-8}$, starting from different points. The first point is $(-0.98,-2.45)$ , in which $\relu$ is in the active state. The dimension of the kernel of the pullback metric is $1$ and the equivalence class is a line (\cref{fig:relu_2d_activation}, yellow curve). All the points of this line are mapped by $f$ to the same output: the same happens for the point $(-1.5,-2)^\top$, whose equivalence class is the violet curve in \cref{fig:relu_2d_activation}. The other starting point is $(-1.45,1.30)^\top$, which does not activate the ReLU function. Starting from here, one has that $\dim \Ker(f^*g)=2$ and hence the equivalence class is the whole half-plane $x<y$, the gray area in \cref{fig:relu_2d_activation}. The blue curve depicts the equivalence class of $(1.45,1.30)^\top$: since the number of allowed step is 5000, only a small portion of its equivalence class is recovered (which is actually infinite). Choosing another point, namely $(0.2,1)^\top$, one obtains the orange curve in \cref{fig:relu_2d_activation}. Due to the intrinsic stochastic nature of the adopted random walk approach, different runs of SiMEC leads to different reconstructions of the (partial) equivalence class in the inactive region: \cref{fig:relu_2d_inactive} shows that 3 different runnings, starting from the same point and with the same initialisation, lead to different paths. As discussed at the beginning of \Cref{sec:numerical_reconstruction_non_differentiable}, \Cref{al:SIMECnD} and its modified version for ReLU layers yields similar results.
\begin{figure}
	\begin{center}
		\begin{subfigure}[t]{0.5\textwidth}
			\centering
			\includegraphics[width=.95\linewidth]{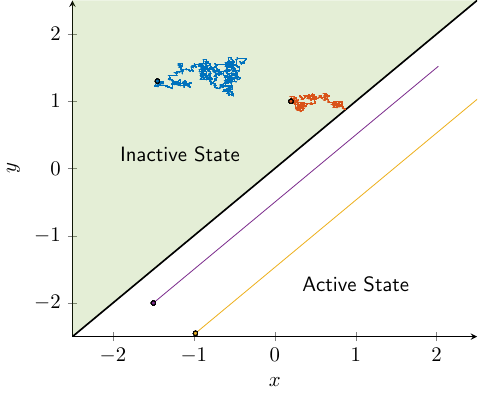}
			\caption{Equivalence classes of 4 different points.}
			\label{fig:relu_2d_activation}
		\end{subfigure}%
		\begin{subfigure}[t]{0.5\textwidth}
			\centering
			\includegraphics[width=1.\linewidth]{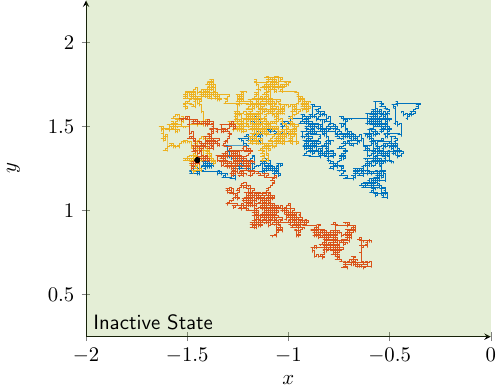}
			\caption{Different paths are recovered starting from the same point.}
			\label{fig:relu_2d_inactive}
		\end{subfigure}
	\end{center}
	\caption{SiMEC algorithm for ReLU function. The circular dots in the plots refer to the starting points of the algorithm. Left panel: the black solid line depicts the activation line for function $f$. For $x>y$ ReLU is in its active state, while for $x \leq y$ is inactive (gray area). Left panel: blue and orange paths depict the recovered equivalence class of $(1.45,1.30)^\top$ and $(0.2,1)^\top$, respectively. The violet and the yellow line consists in the equivalence classes of $(-0.98,-2.45)$ and $(-1.5,-2)^\top$, respectively. Right panel: 3 different SiMEC runnings with the same starting point. We refer to the online version of the work for color references.}
	\label{fig:relu_2d}
\end{figure}
Next we consider a 3-dimensional case. Let us consider $f=\relu \circ A$ with
$$
A= 
\begin{pmatrix}
1 \\
-1 \\
1
\end{pmatrix}^T.
$$ 
This time we run the SiMEC algorithm for $5000$ steps, with $\delta = 10^{-2}$ and $\epsilon = 1 \cdot 10^{-1}$, in order to see more clearly what happens near the starting points. The ReLU function is in the active state for the points $(x,y,z) \in \mathbb{R}^3$ such that $z>y-x$. We consider initially the point $(1.85,1.3,-1)^\top$, for which the ReLU function is inactive: in this 3 dimensional case, the equivalence class consists in the entire halfspace \emph{above} the plane $y-x-z=0$. In \cref{fig:relu3d_inactive1} we depict the path recovered via out procedure, together with the approximating plane obtained via a Least Square regression, shown for clarifying the visual inspection. Indeed, it is evident that all the points explore the space among all the 3 directions. 

Starting, instead, from the point $(1.85,1.30,1.50)$ we recognize that we are in the active region: indeed, by computing the regression plane of the points generated by the algorithm one obtains the plane $-x+y-z+2.05=0$, which is parallel to $-x+y-z=0$ (see \cref{fig:relu_3d_active}).

\begin{figure}
	\begin{subfigure}[t]{0.32\textwidth}
		\centering
		\includegraphics[height=0.25\textheight]{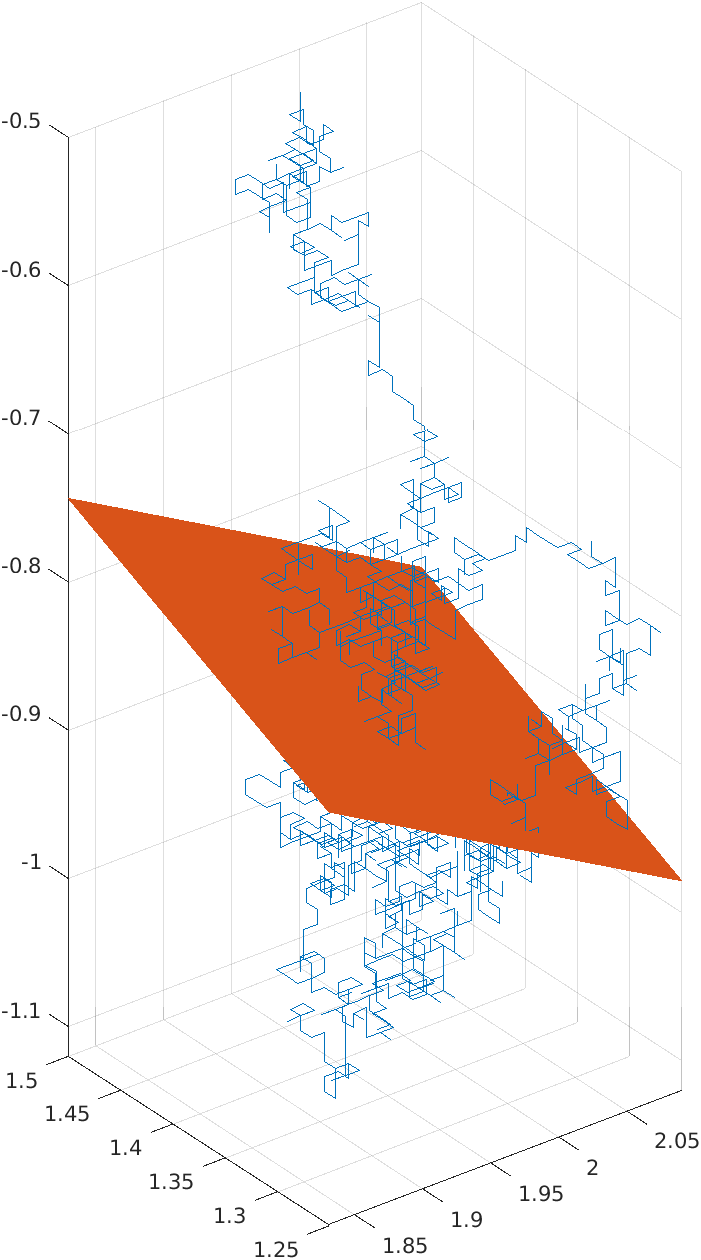}
		\caption{Equivalence class when ReLU is inactive.}
		\label{fig:relu3d_inactive1}
	\end{subfigure}\hfill\begin{subfigure}[t]{0.32\textwidth}
		\centering
		\includegraphics[height=0.25\textheight]{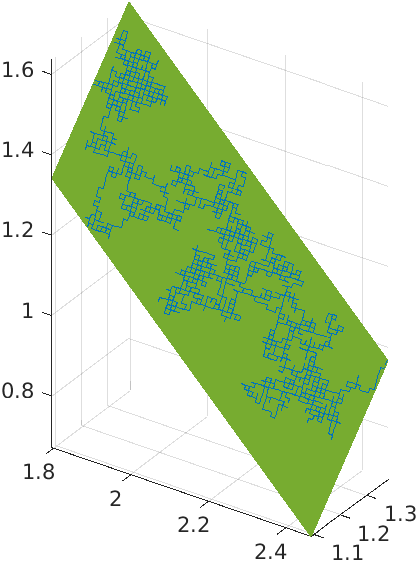}
		\caption{Equivalence class when ReLU is active.}
		\label{fig:relu_3d_active}
	\end{subfigure}\hfill\begin{subfigure}[t]{0.32\textwidth}
		\centering
		\includegraphics[height=0.25\textheight]{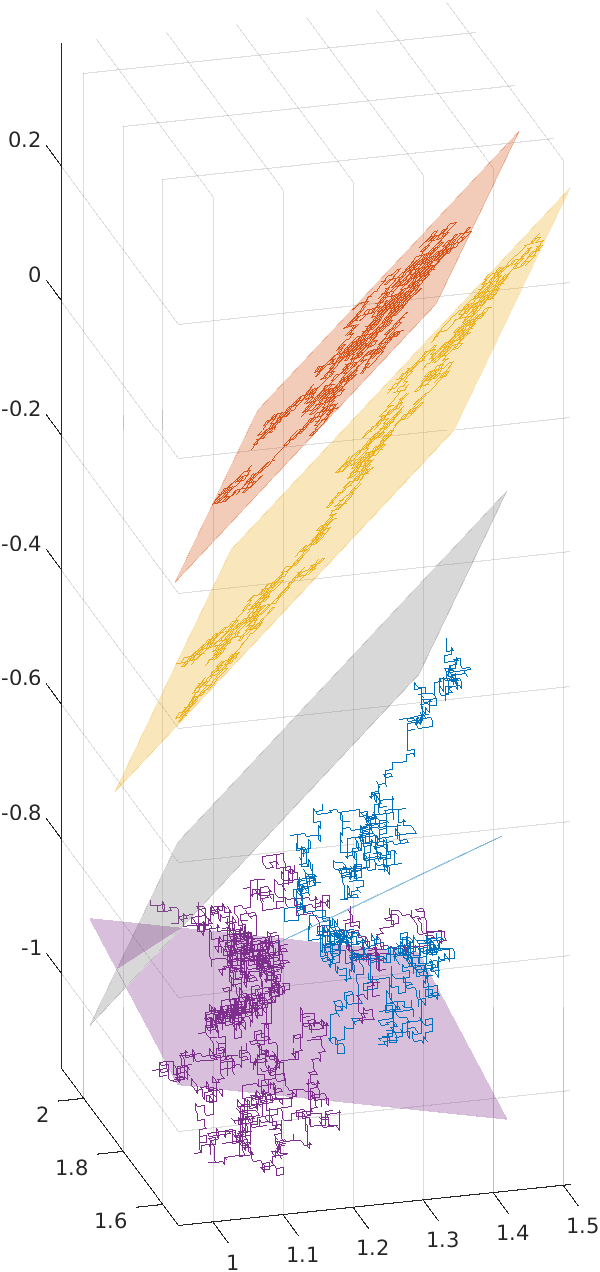}
		\caption{Gray plane is the separating plane.}
		\label{fig:relu_3d_all}
	\end{subfigure}
	\caption{ReLU's equivalence classes in 3D case. Left panel: when the starting point lies in the inactive region, then the equivalence classes consists in a the whole half space below the activation plane. we depict in orange the regression plane of the points generated by the algorithm for visualization purposes. Central panel: if the starting point lies in the active region, then the sequence generated by the algorithm lies on a plane, which is parallel to the activation plane. Right Panel: the gray plane depicts the separating plane $-x+y-z=0$. Blue and orange paths and plane refer to two different runs of the algorithm when the starting point is $()1.85,1.3,-1)^\top$, while the yellow and the violet ones are equivalence classes when the starting points are on the active region.}
	\label{fig:relu_3d_1}
\end{figure}

\paragraph{Leaky ReLU activation function}

The subsequent experimental part regards the leaky ReLU activation function, another map which is not differentiable at the origin. As for the previous case with ReLU, we begin with a numerical experiment in $\mathbb{R}^2$. Let us consider $f=\sigma \circ A$ with
\begin{equation*}
\sigma(x) =
\begin{cases}
-0.01x & \textit{ if } x < 0\\
x & \textit{ if } x \geq 0
\end{cases}, \quad 
A= 
\begin{pmatrix}
2 \\
-1
\end{pmatrix}^T.
\end{equation*}
We run the SiMEC algorithm for $10000$ steps, with $\delta = 1\cdot 10^{-3}$ and $\epsilon = 1 \cdot 10^{-8}$, starting first from the $(-1,2)^\top$. The equivalence class is depicted in blue in \cref{fig:leaky_relu_2d}: such class is a parallel line to $y=2x$, which is the linear operator described by the matrix $A$. Considering the point $(2,1.3)^\top$ one obtains again a line parallel to $y=2x$ (see \cref{fig:leaky_relu_2d}, orange line). The equivalence class is a line even in this case. Recall that such function has been employed as an approximation of $0$ for avoiding the vanishing gradient problem.
\begin{figure}[htbp]
	\begin{center}
		\begin{subfigure}[t]{0.45\textwidth}
			\includegraphics[width=1\textwidth]{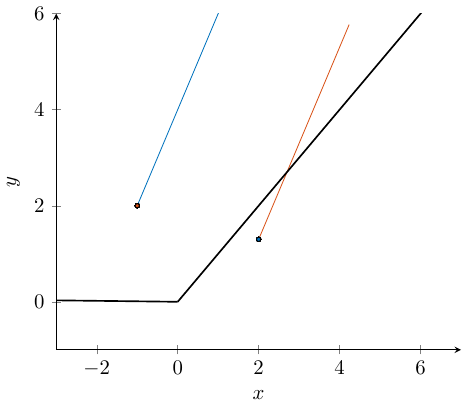}
			\caption{Leaky ReLU, $2D$ case.}
			\label{fig:leaky_relu_2d}
		\end{subfigure}\hfill\begin{subfigure}[t]{0.45\textwidth}
			\includegraphics[width=1\textwidth]{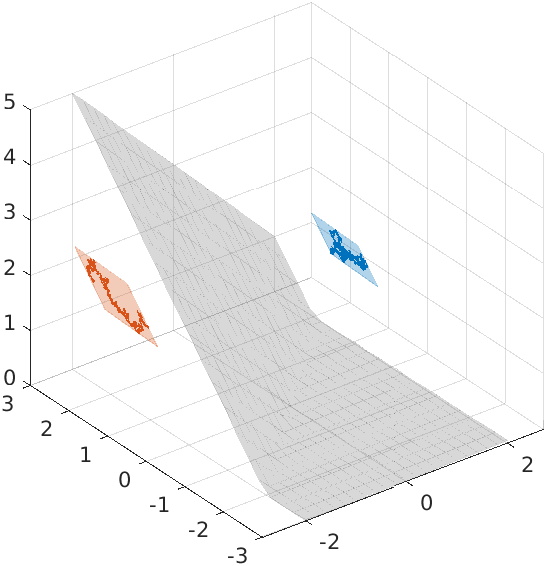}
			\caption{Leaky ReLU, $3D$ case.}
			\label{fig:leaky_relu_3d_1}
		\end{subfigure}
	\end{center}
	\caption{Results on leaky ReLU layers. Left panel: $2D$ case. The blue curve is a part of the equivalence class of $(1.85,1.30)$, while the orange one of the equivalence class of $(-1.85,1.30)$. In this case the equivalence classes are all parallel lines. Right panel: $3D$ case. It is evident that all the points trigger the activation function, and the algorithm provides paths that lie on parallel planes.}
	\label{fig:leaky_relu}
\end{figure}

As already done for the ReLU function, we move to the 3D case, considering the $f=\sigma \circ A$ with 
\begin{equation*}
\sigma(x) =
\begin{cases}
-0.01x & \textit{ if } x < 0\\
x & \textit{ if } x \geq 0
\end{cases}, \quad 
A= 
\begin{pmatrix}
1 \\
-1\\
1
\end{pmatrix}^T,
\end{equation*}
which is described by the same linear operator but the activation function is now the Leaky ReLU. As already observed in the two dimensional case, the equivalence classes are parallel planes: \cref{fig:leaky_relu_3d_1} depicts the recovered paths of the points $(-1.85,1.3,1.5)^\top$ (orange) and $(1.85,1.3,1.5)^\top$ (blue), together with the separating surface (gray).

\subsection{Equivalence classes in the MNIST handwritten digits dataset}\label{subsec:exp_mnist}

In this section we use the framework proposed in \cref{sec:extension_non_differentiable,sec:extension_c1_layers} to study the equivalence classes in the MNIST handwritten digits dataset, which in this case correspond to the set of all the images classified with the same probability as the same digit. In this numerical experiment we employ the network depicted in \cref{fig:mnist_network}: two subsequent blocks consisting in convolutional, average pool layer and ReLU layers are followed by a vectorization operation. Then, two couples of fully connected plus ReLU layers are spaced by a dropout layer. Eventually, a fully connected layer followed by a softmax one provides the final classification probabilities. The dimension of the filters of the convolutional layers is $5\times 5$, whilst the average pool layers are halving the spatial dimensions of the input.
\begin{figure}[htbp]
	\begin{center}
		\includegraphics[width=\textwidth]{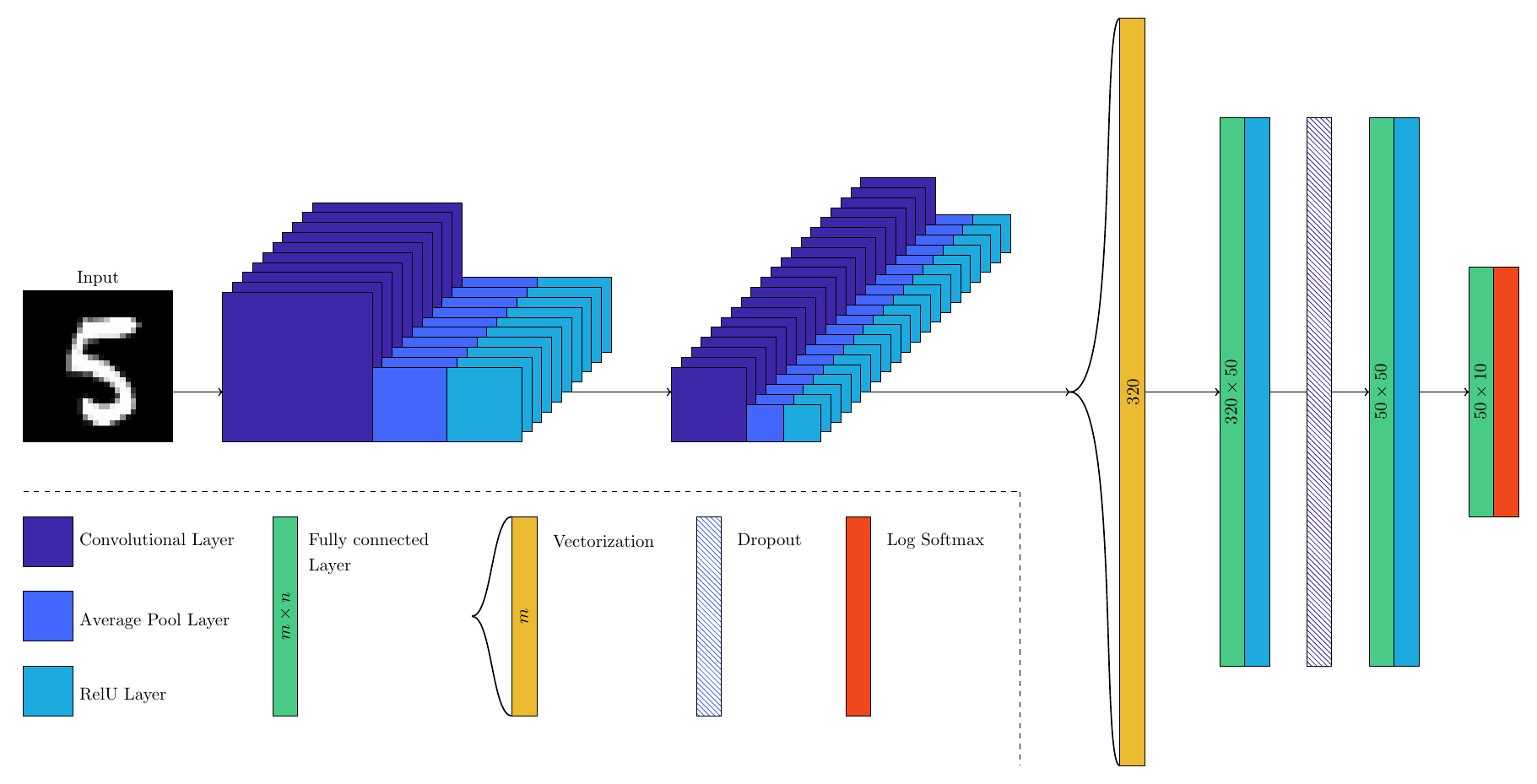}
	\end{center}
	\caption{The neural network used in the numerical experiments with the MNIST handwritten digits dataset. The vectorization operation transforms the output of the second ReLU layer into a vector with 320 components; the fully connected layers are characterized by the dimension $m\times n$.}
	\label{fig:mnist_network}
\end{figure}
As a first step, the training process consists in 10 epochs,  arriving to a $98 \%$ accuracy, minimizing the negative log likelihood loss via the SGD method with a learning rate of 0.01 and a momentum of 0.5. The first experiment we perform with the MNIST handwritten digits dataset \cite{mnistdataset} is the reconstruction of equivalence classes -- namely pictures of digits classified in the same way. To this end, we run \cref{al:SIMECnD} for $1000$ steps with $\epsilon=10^{-5}$ and $\delta=10^{-4}$, starting from images of the digit zero and of the digit five. The results can be seen in \Cref{fig:mnist_null_direction}.
\begin{figure}[h]
	\begin{center}
		\begin{subfigure}[t]{0.45\textwidth}
			\includegraphics[width=\textwidth]{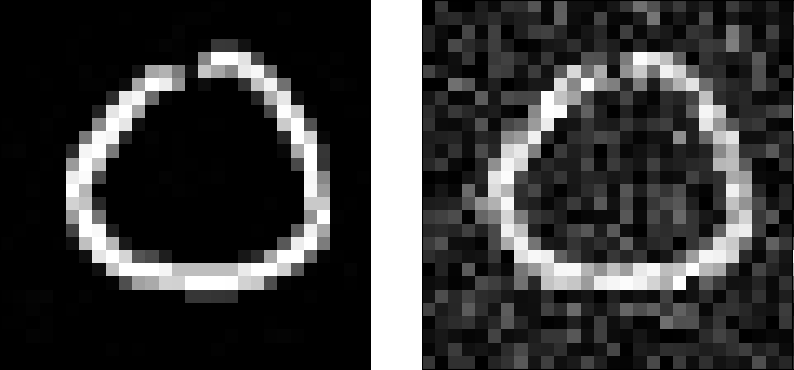}
			\caption{Images of 0 recognized to be in the same equivalence class: they are both classified as zeroes with a $99.78114 \%$ probability.}
		\end{subfigure}\hfill\begin{subfigure}[t]{0.45\textwidth}
			\includegraphics[width=\textwidth]{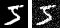}
			\caption{Images of 5 recognized to be in the same equivalence class: they are both classified as fives with a $99.63834 \%$ probability.}
		\end{subfigure}
	\end{center}
	\caption{Proceeding along null directions does not change the output of the network. A random walk built using the eigenvectors associated to the null vectors can be employed to see how much a network is resistant to the presence of noise in the pictures.}
	\label{fig:mnist_null_direction}
\end{figure}
As expected the images produced by the algorithm are classified in the same way -- if the parameter $\delta$ employed to build the polygonal approximating the null curve is small enough. 

Next, we try to modify the picture of a digit proceeding along the non-null directions, which, according to the results in \cref{sec:geometric_framework}, correspond to move among different equivalence classes. Employing \Cref{al:SIMEXPnD} for $1000$ steps with $\epsilon=10^{-5}$ and $\delta=10^{-4}$, we start from an image of the digit four and we arrive to a picture that may represent the digit nine. Indeed, this is confirmed by the output of the network which classifies the final step of the path built by SiMEXP as 9. A visual inspection of some iterates fo the algorithm are depicted in \cref{fig:mnist_null_direction}.
\begin{figure}[h]
	\begin{center}
		\includegraphics[width=\textwidth]{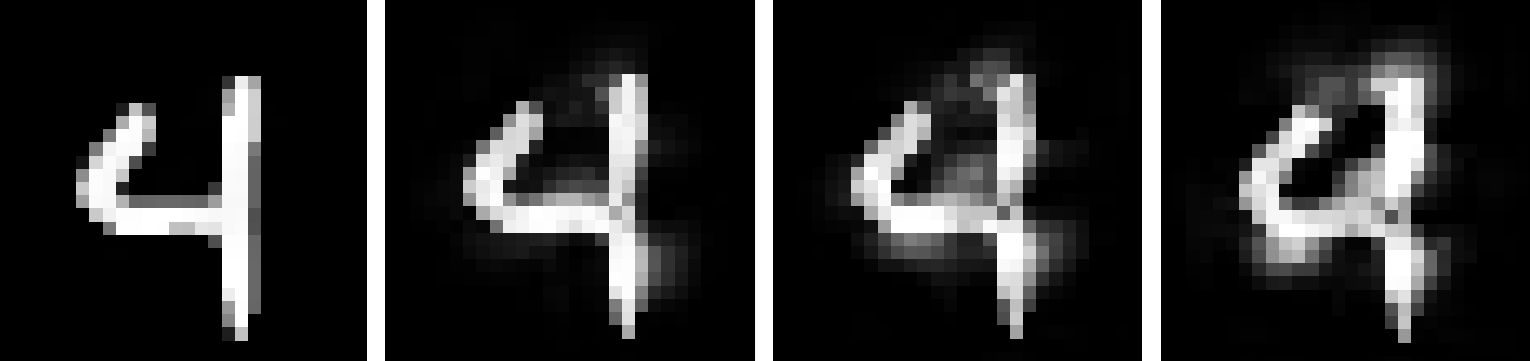}
	\end{center}
	\caption{Proceeding along non-null directions does change the output of the network. After a few iterations a random walk built using the eigenvectors associated to the null vectors transforms the image of the digit four in a picture which is resembling a nine. This can be seen also looking at the output of the network. The first picture is classified as a $4$ with a probability of $99.69651 \%$ and as a nine with probability $5.32 \cdot 10^{-4}$. While we run the algorithm the probability to be classified a a 4 decreases. After $1000$ iterations we arrive to the last picture, which is classified as a $4$ with a probability of $29.71972 \%$ and as a $9$ with probability $70.54823 \%$.}
	\label{fig:mnist_positive_direction}
\end{figure}

\subsection{Level sets in a $4D$ time series}
\label{subsec:exp_ccpp}

Now we build equivalence classes in a regression task over a time-series, training a network over the Combined Cycle Power Plant dataset \cite{combined_cycle_power_plant}. This dataset contains $9568$ points collected between 2006 and 2011 from a power plant working at full load. The features consist of hourly average ambient variables: Temperature (in $^\circ C$), exhaust vacuum (in $cm \ Hg$), ambient pressure (in $millibar$) and relative humidity (dimensionless). To each of these quadruplets, the associated electrical energy output of the plant is recorded. The regression task is to predict the energy output (in $MW$) corresponding to the features. In \cite{electronics12112431} this goal was accomplished making use of a deep transformer encoder containing fully connected layers, residual blocks and a self-attention block. Since our goal is to test the theory developed in this paper, we use a simplified version of this network and we substitute the self-attention block with a LSTM unit -- which will roughly play the same role, namely finding patterns between different points in a sequence. The diagram of our network is depicted in \Cref{fig:ccpp_network}.
\begin{figure}
	
	\begin{center}
		\includegraphics[width=\textwidth]{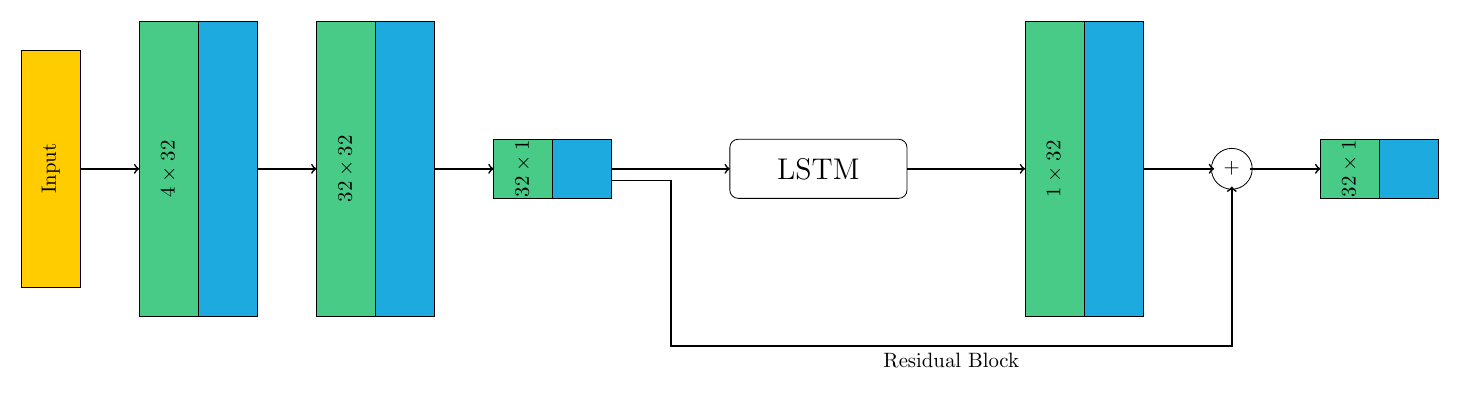}
	\end{center}
	\caption{The simple deep neural network we employ in this numerical experiment, with three fully connected layers, a residual block and a LSTM unit. $x$ and $y$ represent the input (of dimension $4$) and the output (of dimension $1$). $F_1,F_2,F_3,F_4,F_5$, are ReLU fully connected layers of dimensions -- input $\times$ output -- $4 \times 32$, $32 \times 32$, $32 \times 1$, $1 \times 32$ and $32 \times 1$ respectively.}
	\label{fig:ccpp_network}
\end{figure}
We run \Cref{al:SIMECnD} for $1000$ steps with $\epsilon=10^{-7}$ and $\delta=10^{-4}$, starting from a point whose ambient variables are $23.64 \ {}^\circ C, \ 58.49 \ millibar, \ 1011,4 \ mm \ Hg$ and a relative humidity of $74.2 \%$. For these values the network predicts an energy output of $469.69 \ MW$. A random walk in the equivalence class of this point built by the algorithm is illustrated in \Cref{fig:random_walk_ccpp}, via 2d and 3d projections.

\begin{figure}[htbp]
	\begin{subfigure}[t]{0.4\textwidth}
		\begin{center}
			\includegraphics[width=1\linewidth]{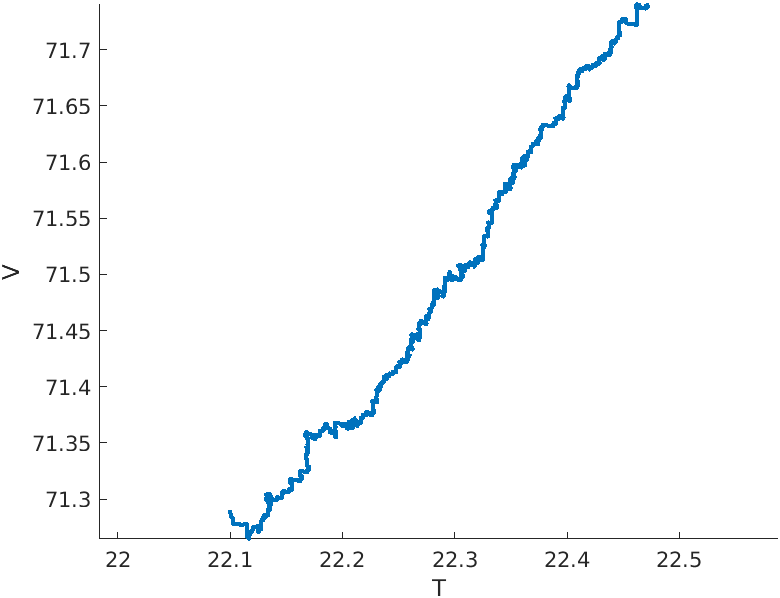}
		\end{center}
		\caption{2D projection ($T-V$) of the recovered equivalence class.}
	\end{subfigure} \hfill\begin{subfigure}[t]{0.4\textwidth}
		\begin{center}
			\includegraphics[width=1\linewidth]{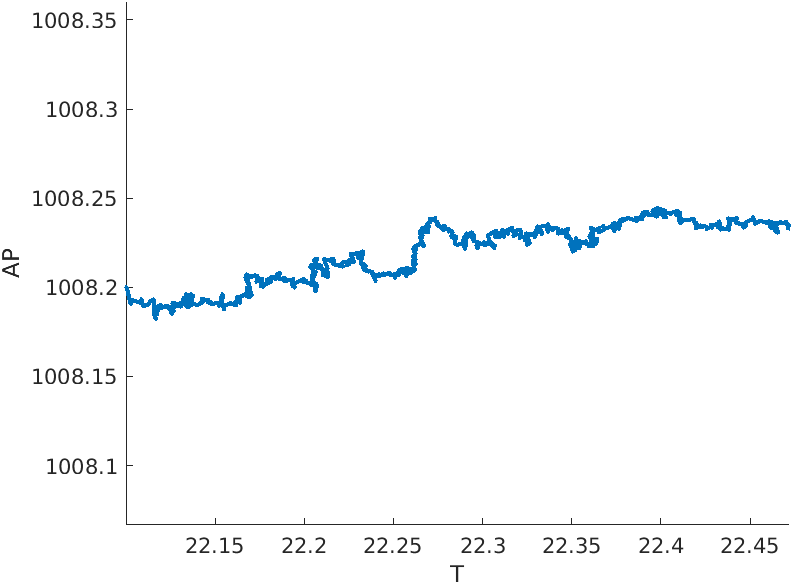}
		\end{center}
		\caption{2D projection ($T-AP$) of the recovered equivalence class.}
	\end{subfigure}
	\begin{subfigure}[t]{1\textwidth}
		\begin{center}
			\includegraphics[width=.9\linewidth]{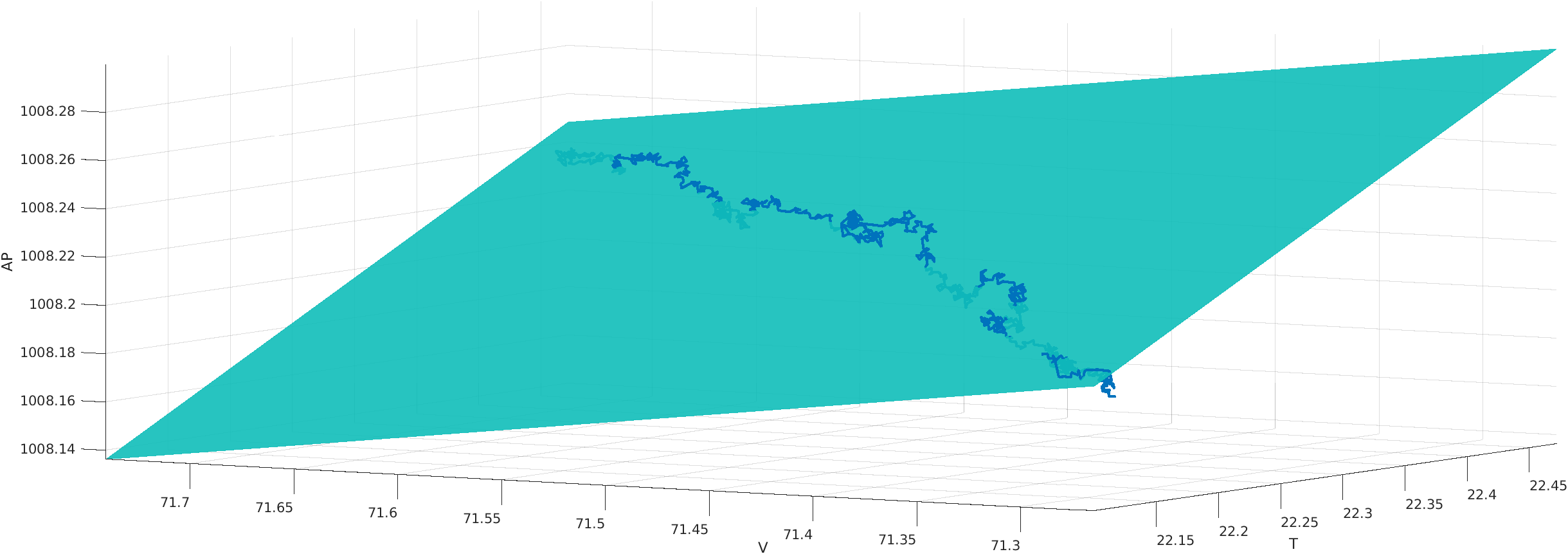}
		\end{center}
	\end{subfigure}
	\caption{Top panels: two $2$-D projections of a random walk in the equivalence class of the point $P_0=(23.64,58.49,1011,4,74.2)$ in the features space. Bottom panel $3$-D projection on the $T,V,AP$ space of the random walk we built in the equivalence class of the same point $P_0=(23.64,58.49,1011,4,74.2)$. The prediction associated by the network to all the points of this random walk is always $469.69 \ MW$.}
	\label{fig:random_walk_ccpp}
\end{figure}

\section{Conclusions}
\label{sec:concl}
In this work we extended the results of \cite{BeMa21a} to more complex network architectures and to a more general class of functions, furthermore we developed the new tailored version for multidimensional problems of SiMEC and SiMExp algorithms of \cite{BeMa21b}. Practical applications, namely to image classification problems and to time series problem in power plants problem, show that this approach provides insights on the behaviour of deep networks. Further research can be done, bridging these results with XAI methods, in order to provide explanation of AI behaviour in particular instances, such as adversarial attacks.

\printbibliography

\end{document}